\documentclass{amsart}
\usepackage{amsthm}
\newtheorem{theorem}{Theorem}
\newtheorem{lemma}[theorem]{Lemma}
\newtheorem{proposition}[theorem]{Proposition}
\theoremstyle{definition}
\newtheorem{definition}[theorem]{Definition}
\newtheorem{problem}[theorem]{Problem}
\newtheorem{algorithm}[theorem]{Algorithm}
\theoremstyle{remark}
\newtheorem{remark}[theorem]{Remark}

\usepackage{amssymb}
\usepackage{empheq}
\usepackage{stmaryrd}
\usepackage{enumerate}
\usepackage[shortlabels]{enumitem}
\usepackage{calc}

\newcommand{\norm}[1]{\left\lVert#1\right\rVert}
\usepackage{mathtools}
\DeclarePairedDelimiter{\ceil}{\lceil}{\rceil}
\DeclarePairedDelimiter{\floor}{\lfloor}{\rfloor}
\DeclarePairedDelimiter{\abs}{\lvert}{\rvert}

\newcommand\blfootnote[1]{%
  \begingroup
  \renewcommand\thefootnote{}\footnote{#1}%
  \addtocounter{footnote}{-1}%
  \endgroup
}

\usepackage[left=2.0cm,%
                right=2.0cm,%
                top=2.5cm,%
                bottom=3.5cm,%
                headheight=12pt,%
                a4paper]{geometry}%

\begin{document}

\title[Quantitative analysis of a subgradient-type method for equilibrium problems]{Quantitative analysis of a subgradient-type method for equilibrium problems$^0$}\blfootnote{${}^0$This paper is a condensed version of the Bachelor 
thesis \cite{Pischke} of the first author written under the supervision of the 2nd author.}

\author[Nicholas Pischke and Ulrich Kohlenbach]{Nicholas Pischke and Ulrich Kohlenbach}
\date{\today}
\maketitle
\vspace*{-5mm}
\begin{center}
{\scriptsize Department of Mathematics, Technische Universit\"at Darmstadt,\\
Schlossgartenstra\ss{}e 7, 64289 Darmstadt, Germany, \ \\ 
E-mails: pischkenicholas@gmail.com,kohlenbach@mathematik.tu-darmstadt.de}
\end{center}

\maketitle
\begin{abstract}
We use techniques originating from the subdiscipline of mathematical logic 
called `proof mining' to provide rates of metastability and - under a 
metric regularity assumption - rates of convergence for a subgradient-type algorithm solving the equilibrium problem in convex optimization over fixed-point sets of firmly nonexpansive mappings. The algorithm is due to H. Iiduka and 
I. Yamada who in 2009 gave a noneffective proof of its convergence.
This case study illustrates the applicability of the logic-based abstract 
quantitative analysis of general forms of Fej\'er monotonicity as given 
by the second author in previous papers. 
\end{abstract}
\noindent
{\bf Keywords:} Equilibrium problems, firmly nonexpansive mappings, 
subgradient-type method, proof mining\\ 
{\bf MSC2010 Classification:} 47H06, 47J25, 90C33, 03F10

\section{Introduction}
In \cite{KLN2018}, a general logic-based analysis of abstract forms of 
convergence theorems based on general forms of Fej\'er monotonicity is given.
That paper uses methods from the subdiscipline of mathematical logic called 
`proof mining', which aims at the extraction 
of effective bounds from prima facie nonconstructive proofs by logical transformations (see \cite{Koh2008}
for a book treatment and \cite{Koh18} for a recent survey).

Even in most simple cases of ordinary Fej\'er monotonicity on 
the real line and with all the data involved trivially being computable, 
there, in general, are no computable rates of convergence as one can 
show using methods from computability theory (see the discussion in 
\cite{KLN2018} and - in particular - \cite{Neu2015}) which sharpen 
known `arbitrary slow convergence' phenomena discussed in optimization
to noncomputability results. 

Logically speaking, this is because the formulation of the Cauchy-property of a sequence $(x_n)_{n\in\mathbb{N}}$ (say in a metric space $(X,d)$), that is
\[
\forall k\in\mathbb{N}\exists N\in\mathbb{N}\forall n,m\geq N\left(d(x_n,x_m)< 
\frac{1}{k+1}\right)
\]
is of the form $\forall\exists\forall$ which, in general, is of too high logical complexity (and thus not covered by the general logical metatheorems 
used in proof mining to extract bounds from noneffective proofs).

What can be achieved by the aforementioned logical metatheorems, however, 
are effective rates of so-called metastability
which, moreover, are highly uniform. Metastability is based on 
a (noneffectively equivalent but constructively weakened) reformulation of the convergence or Cauchy statements into what is known in logic as Herbrand normal form. In the context of the above example, this reformulation is given by (here $[n;n+g(n)]:=\{ n,n+1,n+2,\ldots,n+g(n)\}$)
\[
\forall k\in\mathbb{N}\forall g\in\mathbb{N}^\mathbb{N}\exists n\in\mathbb{N}\forall i,j\in [n;n+g(n)]\left(d(x_i,x_j)< \frac{1}{k+1}\right).
\]
This statement is of the general form $\forall\exists$ (considering the leading two universal quantifiers as one and disregarding the last universal quantifier as it is bounded) and for statements of the above form, the logical metatheorems of proof mining guarantee the extractability of highly uniform 
effective bounds on `$\exists n\in\mathbb{N}$' (see \cite{Koh2008}). Such bounds are by now well-known in the literature under the name of rates of metastability (after Tao, see e.g. \cite{Tao2008a, Tao2008b}). 

One important consequence of the Fej\'er monotonicity (in 
the very general sense 
of \cite{KLN2018}) of an iterative sequence $(x_n)_{n\in\mathbb{N}}$ is that effective 
rates of convergence {\bf can} be 
established if some general form of regularity, provided quantitatively 
by a so-called 
modulus of regularity, which generalizes many concepts of regularity used 
in optimization, is given (see \cite{KLAN2019}). The existence of 
regularity usually requires to be in a rather special (`tame') 
context where the 
sets in question are e.g. semialgebraic so that tools from the 
model theory of o-minimal structures can be utilized (see e.g. 
\cite{Ioffe2009,Bolteetal} and - for a concrete example - \cite{Borwein17}).

Our paper is intended as a case study to illustrate how the abstract 
approach from \cite{KLN2018,KLAN2019} can be used in a very concrete 
situation to give a perspicuous quantitative analysis of the algorithm 
being considered. The logic-based notions used in these papers now have 
a concrete mathematical meaning so that the whole treatment can be 
given without any reference to logic. We will indicate in the next 
subsection that we expect that many other algorithms can be analyzed in 
a similar way.

In \cite{IY2009}, the following equilibrium problem over the fixed point 
set of a firmly nonexpansive mapping is studied: 
let $f:\mathbb{R}^N\times\mathbb{R}^N\to\mathbb{R}$ be a function such that:
\begin{enumerate}
\item $f(x,x)=0$ for any $x\in\mathbb{R}^N$;
\item $f(\cdot,y)$ is continuous for any $y$;
\item $f(x,\cdot)$ is convex for any $x$.
\end{enumerate}
Now we can state the equilibrium problem for $f$ (a so-called \emph{equilibrium function}) over the fixed point set $Fix(T)$ of $T,$ where $T:\mathbb{R}^N\to\mathbb{R}^N$ is a firmly nonexpansive mapping 
with a nonempty fixed point set.
\begin{problem}[Equilibrium problem of $f$ over $\mathrm{Fix}(T)$]\label{problem} Find a
\[
u\in\mathrm{EP}(\mathrm{Fix}(T),f):=\{u\in\mathrm{Fix}(T)\mid f(u,y)\geq 0\text{ for all }y\in\mathrm{Fix}(T)\}.
\]
\end{problem}
Iiduka and Yamada proposed a subgradient-type iterative algorithm $(x_n)_{n\in\mathbb{N}}$ 
(see also \cite{IS2003b} for a related algorithm also discussed in 
\cite{IY2009}) 
and showed that it converges to a point in $\mathrm{EP}(\mathrm{Fix}(T),f).$
However, there is no quantitative information given in the theorem. In this paper we provide explicit quantitative versions of this result as well as of several intermediate convergence results such as  
\[
\lim_{n\to\infty}f(y_n,x_n)=0\text{ as well as }\lim_{n\to\infty}\norm{x_n-Tx_n}=0.
\]
As mentioned already, even for $N=1$ and $f\equiv 0$ one can 
construct a simple computable $T$ such that $(x_n)_{n\in\mathbb{N}}$ has no computable rate by adapting a counterexample from \cite{Neu2015}. 

Nevertheless, we present a fully effective and highly uniform rate of 
metastability for the algorithm providing a complete finitary account of 
the main result in \cite{IY2009}.

Moreover, in Section \ref{sec:addass} we even give a rate of convergence, modulo an additional metric regularity 
assumption.
\subsection{Analytical preliminaries}
Throughout, we consider $\mathbb{R}^N$ ($N\geq 1$) as the Euclidean space with the usual inner product $\langle\cdot,\cdot\rangle$ and the induced Euclidean norm $\norm{\cdot}$. With $B_r(x)$ and $\overline{B_r(x)}$, we denote the open and closed ball with radius $r>0$ and center $x\in\mathbb{R}^N$ with respect to $\norm{\cdot}$, respectively. \\
Throughout, if not stated otherwise, let $T:\mathbb{R}^N\to\mathbb{R}^N$ be a \emph{firmly nonexpansive mapping}, that is for all $x,y\in\mathbb{R}^N$:
\[
\norm{Tx-Ty}^2\leq\langle x-y,Tx-Ty\rangle.
\]
In particular, $T$ is also \emph{nonexpansive}.

\subsection{The subgradient method of Iiduka and Yamada}
for the equilibrium problem utilizing the subgradient of the equilibrium function $f$:
\begin{algorithm}[Subgradient-type method for Problem \ref{problem}]\label{algorithm}
Choose $\varepsilon_0\geq 0$, $\lambda_0>0$ and $x_0\in\mathbb{R}^N$ arbitrarily and define $\rho_0:=\norm{x_0}$ and set $n=0$. Then, repeat:
\begin{itemize}
\item Given $x_n\in\mathbb{R}^N$ and $\rho_n\geq 0$, choose $\varepsilon_n\geq 0$ and $\lambda_n>0$.
\item Find a point $y_n\in K_n:=\overline{B_{\rho_n+1}(0)}$ such that
\[
f(y_n,x_n)\geq 0\text{ and }\max_{y\in K_n}f(y,x_n)\leq f(y_n,x_n)+\varepsilon_n.
\]
\item Choose $\xi_n\in \partial f(y_n,\cdot)(x_n)$ arbitrarily, define
\[
x_{n+1}:=T(x_n-\lambda_nf(y_n,x_n)\xi_n)\text{ and }\rho_{n+1}:=\max\{\rho_n,\norm{x_{n+1}}\}
\]
and update $n\to n+1$.
\end{itemize}
\end{algorithm}
As discussed in \cite{IY2009}, this algorithm is based on the combination of 
ideas from two well-known algorithms, namely the hybrid steepest descent method of Yamada \cite{Yam2001} and the scheme of Iusem and Sosa from 
\cite{IS2003b}. Note that the approximate maximum point $y_n$ can be computed 
effectively due to 
the error $\varepsilon_n$ whenever the latter is strictly 
positive and $f(\cdot,x_n)$ 
comes with a modulus of uniform continuity on $K_n$ while for $\varepsilon_n=0$ 
there in general would be no computable point $y_n$ (see \cite{Ko91} for a 
discussion of this point in terms of complexity theory).

In \cite{IY2009}, 
the following theorem on the correctness of the algorithm is established:
\begin{theorem}[Iiduka and Yamada, \cite{IY2009}]\label{thm:main}
Let $\mathrm{Fix}(T)\neq\emptyset$. Assume that there is an $M>0$ with $\norm{\xi_n}\leq M$ for all $n\in\mathbb{N}$. Then the sequences $(x_n)_{n\in\mathbb{N}},(y_n)_{n\in\mathbb{N}}$ generated by the algorithm satisfy:
\begin{enumerate}[(a)]
\item For all $u\in\Omega_n:=\{u'\in\mathrm{Fix}(T)\mid f(y_n,u')\leq 0\}$:
\[
\norm{x_{n+1}-u}^2\leq\norm{x_n-u}^2+\lambda_n(M^2\lambda_n-2)(f(y_n,x_n))^2.
\]
In particular, if $\lambda_n\in [0,2/M^2]$:
\[
\norm{x_{n+1}-u}\leq\norm{x_n-u}.
\]
\item If $\Omega:=\bigcap_{n=1}^\infty\Omega_n\neq\emptyset$ and $\lambda_n\in [a,b]\subseteq (0,2/M^2)$ for some $a,b\geq 0$ and all $n\in\mathbb{N}$, then the sequences $(x_n)_{n\in\mathbb{N}},(y_n)_{n\in\mathbb{N}}$ are bounded and
\[
\lim_{n\to\infty}f(y_n,x_n)=0\text{ as well as }\lim_{n\to\infty}\norm{x_n-Tx_n}=0.
\]
\item If $\varepsilon_n\geq 0$ for all $n$ with $\lim_{n\to\infty}\varepsilon_n=0$, in addition to the requirements for (b), then $(x_n)_{n\in\mathbb{N}}$ converges to a point in $\mathrm{EP}(\mathrm{Fix}(T),f)$.
\end{enumerate}
\end{theorem}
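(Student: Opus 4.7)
My plan mirrors the three parts of the theorem.

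For (a), I would set $z_n := x_n - \lambda_n f(y_n,x_n)\xi_n$, so that $x_{n+1} = Tz_n$. Since firm nonexpansiveness entails ordinary nonexpansiveness and $Tu = u$ for $u\in\mathrm{Fix}(T)$, expanding $\norm{x_{n+1}-u}^2 \leq \norm{z_n-u}^2$ gives
\[
\norm{x_{n+1}-u}^2 \leq \norm{x_n-u}^2 - 2\lambda_n f(y_n,x_n)\langle \xi_n, x_n-u\rangle + \lambda_n^2 (f(y_n,x_n))^2\norm{\xi_n}^2.
\]
The subgradient inequality for the convex map $f(y_n,\cdot)$ yields $\langle \xi_n, x_n-u\rangle \geq f(y_n,x_n)-f(y_n,u)$. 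Combined with $f(y_n,u)\leq 0$ (from $u\in\Omega_n$), $f(y_n,x_n)\geq 0$ (built into the algorithm), and $\norm{\xi_n}\leq M$, this produces exactly the claimed inequality; the ``in particular'' statement is then immediate since $\lambda_n(M^2\lambda_n-2)\leq 0$ on $[0,2/M^2]$.

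For (b), fixing $u\in\Omega$, part (a) gives Fej\'er monotonicity of $(x_n)$ with respect to $\Omega$, so $(x_n)$, $(\rho_n)$, and $(y_n)\subseteq K_n$ are all bounded. Since $\lambda_n\in[a,b]\subset(0,2/M^2)$ forces $\lambda_n(2-M^2\lambda_n)\geq a(2-M^2 b)>0$, telescoping (a) produces $\sum_n (f(y_n,x_n))^2<\infty$, so $f(y_n,x_n)\to 0$. For $\norm{x_n-Tx_n}\to 0$ I would sharpen (a) using the equivalent characterisation of firm nonexpansiveness
\[
\norm{Tx-Ty}^2 + \norm{(I-T)x-(I-T)y}^2 \leq \norm{x-y}^2,
\]
applied to $(z_n,u)$. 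This yields a telescoping bound on $\sum_n \norm{z_n-x_{n+1}}^2$, so $\norm{z_n-x_{n+1}}\to 0$; combined with $\norm{z_n-x_n}=\lambda_n f(y_n,x_n)\norm{\xi_n}\leq bM\cdot f(y_n,x_n)\to 0$ and $\norm{Tx_n-x_{n+1}}=\norm{Tx_n-Tz_n}\leq\norm{x_n-z_n}$, the triangle inequality closes the argument.

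For (c), the boundedness from (b) supplies a cluster point $x^*=\lim_k x_{n_k}$, and $\norm{x_n-Tx_n}\to 0$ with continuity of $T$ puts $x^*\in\mathrm{Fix}(T)$. The new ingredient over (b) is $\varepsilon_n\to 0$, which upgrades $f(y_n,x_n)\to 0$ to $\max_{y\in K_n}f(y,x_n)\to 0$. Passing to limits along $(n_k)$ and using continuity of $f(\cdot,y)$ together with continuity of $f(x,\cdot)$ (the latter from its convexity on $\mathbb{R}^N$), one identifies $x^*$ as a point of $\mathrm{EP}(\mathrm{Fix}(T),f)$; the standard principle that a Fej\'er monotone sequence with a cluster point in its reference set converges to that cluster point then promotes $x_{n_k}\to x^*$ to $x_n\to x^*$.

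The hard part will be this last identification step in (c). The approximate-maximality data naturally produces estimates on $f(y,x^*)$ (a ``dual'' inequality $f(y,x^*)\leq 0$ in the limit), while the target equilibrium condition requires $f(x^*,y)\geq 0$ for every $y\in\mathrm{Fix}(T)$. Bridging these two directions will rely on $f(x,x)=0$ and the convexity of $f(x,\cdot)$, together with convexity of $\mathrm{Fix}(T)$ (itself a consequence of firm nonexpansiveness of $T$), to reduce the case of arbitrary $y\in\mathrm{Fix}(T)$ to $y$ in a bounded ball that is eventually covered by $K_n$, so that the asymptotic bound on $\max f(\cdot,x_n)$ can be applied.
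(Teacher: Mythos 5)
Your proposal is correct. Bear in mind that the paper itself does not reprove this theorem (it is quoted from Iiduka--Yamada), but the proof it analyzes is faithfully traced by Lemmas \ref{lem:boundfsqbynormsq}, \ref{lem:boundasym} and \ref{lem:boundfejermono} and by Remark \ref{rem:OmegaimpEP}, and measured against that your argument follows the same route almost everywhere: the subgradient inequality combined with nonexpansiveness for (a); telescoping of $-a(M^2b-2)(f(y_n,x_n))^2\leq\norm{x_n-u}^2-\norm{x_{n+1}-u}^2$ for the first limit in (b); and for (c) the cluster-point argument, the dual inequality $f(y,x^*)\leq 0$ on $\bigcup_n K_n$, and the Iusem--Sosa bridging via $0=f(y_t,y_t)\leq tf(y_t,y)+(1-t)f(y_t,x^*)$ on segments followed by convexity of $f(x^*,\cdot)$ to reach arbitrary $y\in\mathrm{Fix}(T)$ outside the balls $K_n$ --- exactly the ingredients you name. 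The one genuine divergence is the asymptotic-regularity half of (b): the paper's Lemma \ref{lem:boundasym} controls $\norm{x_n-x_{n+1}}$ first, via $\norm{x_n-x_{n+1}}^2\leq\norm{x_n-u}^2-\norm{x_{n+1}-u}^2+2Mbf(y_n,x_n)\norm{x_n-x_{n+1}}$, and then uses $\norm{x_{n+1}-Tx_{n+1}}\leq\norm{x_n-x_{n+1}}+Mbf(y_n,x_n)$, whereas you apply the identity form of firm nonexpansiveness, $\norm{Tx-Ty}^2+\norm{(I-T)x-(I-T)y}^2\leq\norm{x-y}^2$, at the pair $(z_n,u)$ to obtain $\sum_n\norm{z_n-Tz_n}^2\leq\norm{x_0-u}^2$ directly and then transfer to $x_n$ by $\norm{x_n-z_n}\leq bMf(y_n,x_n)\to 0$. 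Your variant is marginally cleaner for the qualitative statement, since it avoids the cross term and any appeal to the diameter of the iterates; the paper's route is the one whose quantitative content is actually reused (a bound in terms of the diameter bound $L$ feeding into Proposition \ref{prop:quantverbII}), so a quantitative analysis along your lines would give a slightly different, and possibly simpler, metastability bound there.
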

In this paper we establish quantitative versions of the claims in this theorem.

\subsection{The range of the results}
First, let us stress that to consider only sets of fixed points $\mathrm{Fix}(T)$ of a firmly nonexpansive mapping $T$ in Problem \ref{problem} is indeed not limiting in the sense that it still 
allows us to consider arbitrary closed and convex sets $C$ in place of $\mathrm{Fix}(T)$: for any such $C$, the metric projection $P_C$ is a firmly nonexpansive mapping (see e.g. 
\cite{BC2017}) with $\mathrm{Fix}(P_C)=C$. See \cite{IY2009} for further considerations on this version of Problem \ref{problem} over $C$.

From that perspective, Problem \ref{problem} can be seen to indeed encompass many general notions and problems from convex optimization as special cases, including in particular the famous 
Nash-equilibrium problem (as treated in \cite{IY2009}) as well as the convex minimization problem, the variational inequality problem and the vector minimization problem, next to others (see \cite{IS2003b}).

Moreover, allowing arbitrary firmly nonexpansive mappings $T$ in place of plain projections $P_C$ can be beneficial in the concrete practical formulation of particular equilibrium problems, as e.g. 
Iiduka and Yamada show in their work \cite{IY2009} for the example of the previously mentioned Nash-equilibrium problem. Here, while dealing with sets $C$ where, on the one hand, $P_C$ may be 
computationally untractable, while, on the other hand, $C$ can be given by (the intersection of) simple closed convex sets $C_i$ whose projections $P_{C_i}$ are tractable, a firmly nonexpansive 
mapping $T$ can be defined using the tractable projections $P_{C_i}$ which is not a projection itself but fulfills $\mathrm{Fix}(T)=C$ and inherits tractability from the $P_{C_i}$.

And further, many practical choices of such sets $C$ from convex optimization already lend themselves to representations as fixed point sets of firmly nonexpansive mappings, a prime example maybe being 
the set of zeros $\mathrm{zer}A$ of a monotone (or accretive) operator $A$. These zero-sets can be expressed as the set of fixed points of the \emph{resolvent} $J_A$ corresponding to $A$ which is, in 
particular, firmly nonexpansive (see \cite{BC2017} for a comprehensive reference on monotone operators).

We expect that various other algorithms for equilibrium problems over 
suitable sets $C$ can be treated by following a similar analysis provided in this paper (using \cite{KLN2018,KLAN2019}). 

\section{A first quantitative analysis}\label{sec:quantana}
A first consequence of Theorem \ref{thm:main} is the following reformulation of (parts of) part (a).
\begin{lemma}\label{lem:normsqdec}
Let $u\in\Omega\neq\emptyset$ and $M>0$ with $M\geq\norm{\xi_n}$ for all $n\in\mathbb{N}$. Suppose $\lambda_n\in [0,2/M^2]$ for all $n\in\mathbb{N}.$ 
Then
\[
\norm{x_{n+1}-u}^2\leq\norm{x_n-u}^2
\]
for all $n\in\mathbb{N}$. Especially, $\norm{x_{n}-u}^2$ converges.
\end{lemma}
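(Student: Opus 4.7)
The plan is to derive the lemma essentially directly from part (a) of Theorem \ref{thm:main}, which has already provided the non-trivial one-step estimate. There is no substantial obstacle; the work is to verify a sign condition and then invoke a standard fact about bounded monotone sequences.

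First, I would fix an arbitrary $u\in\Omega$. Since $\Omega=\bigcap_{n=1}^\infty \Omega_n$, we have $u\in\Omega_n$ for every $n\in\mathbb{N}$, so Theorem \ref{thm:main}(a) applies and yields
\[
\norm{x_{n+1}-u}^2\leq\norm{x_n-u}^2+\lambda_n(M^2\lambda_n-2)(f(y_n,x_n))^2
\]
for each $n$. Next I would examine the coefficient of $(f(y_n,x_n))^2$: since $\lambda_n\in[0,2/M^2]$, one has $\lambda_n\geq 0$ and $M^2\lambda_n-2\leq 0$, so $\lambda_n(M^2\lambda_n-2)\leq 0$. Because $(f(y_n,x_n))^2\geq 0$, the correction term is non-positive, which gives the claimed inequality $\norm{x_{n+1}-u}^2\leq\norm{x_n-u}^2$.

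Finally, this inequality says the sequence $(\norm{x_n-u}^2)_{n\in\mathbb{N}}$ is monotonically non-increasing, and it is trivially bounded below by $0$. Hence by the completeness of $\mathbb{R}$ (monotone convergence), $\norm{x_n-u}^2$ converges. Since the proof is only a sign check and an appeal to monotone convergence, the only thing worth being careful about is making explicit the inclusion $\Omega\subseteq\Omega_n$ used to justify applying part (a) with the fixed $u$ at every stage $n$.
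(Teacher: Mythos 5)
Your proof is correct and matches the paper's intent exactly: the paper presents this lemma as an immediate consequence of Theorem \ref{thm:main}(a) (whose ``in particular'' clause already records the sign check $\lambda_n(M^2\lambda_n-2)\leq 0$ for $\lambda_n\in[0,2/M^2]$), combined with monotone convergence for the bounded nonincreasing sequence $\bigl(\norm{x_n-u}^2\bigr)_{n\in\mathbb{N}}$.
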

As the required sequence is monotone, we can obtain a direct \emph{rate of metastability} for the sequence $\left(\norm{x_n-u}^2\right)_{n\in\mathbb{N}}$ from the 
next lemma which 
follows immediately from \cite{Koh2008}, Proposition 2.27 and Remark 2.29.

\begin{lemma}[Quantitative version of Lemma \ref{lem:normsqdec}]\label{lem:metastabnormsq}
Let $u\in\Omega\neq\emptyset$ with $c_u\geq\norm{x_0-u}^2$ and let $M>0$ with $M\geq\norm{\xi_n}$ for all $n\in\mathbb{N}$. Further, let $\lambda_n\in [0,2/M^2]$ for 
all $n\in\mathbb{N}$. Then for all $k,K\in\mathbb{N}$ and all $g\in\mathbb{N}^\mathbb{N}$:
\[
\exists n\in [K;\Phi'_1(k,g,c_u,K)]\,\forall i,j\in[n;n+g(n)]\;\left(\left\vert\norm{x_i-u}^2-\norm{x_j-u}^2\right\vert <\frac{1}{k+1}\right)
\]
where
\[
\Phi'_1(k,g,c_u,K):=\tilde{g}^{(\ceil*{c_u(k+1)})}(K)\text{ and }\tilde{g}(n):=n+g(n).
\] Here $\tilde{g}^{(n)}(K)$ denotes the $n$-th iteration of $\tilde{g}$ starting from $K.$ For the special case of $K=0$, we simply write $\Phi_1(k,g,c_u):=\Phi'_1(k,g,c_u,0)$.
\end{lemma}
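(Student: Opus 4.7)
The plan is to reduce the metastability statement for $a_n := \norm{x_n - u}^2$ to the well-known pigeonhole argument for monotone bounded sequences, exploiting that Lemma~\ref{lem:normsqdec} gives us $0 \leq a_n \leq c_u$ together with $a_{n+1} \leq a_n$ for all $n \in \mathbb{N}$. For any $i, j \in [n; n+g(n)]$ with $i \leq j$, monotonicity forces $\lvert a_i - a_j \rvert = a_i - a_j \leq a_n - a_{n+g(n)}$, so it suffices to produce an $n \in [K; \Phi'_1(k,g,c_u,K)]$ with $a_n - a_{n+g(n)} < 1/(k+1)$.

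Next I would iterate $\tilde{g}$ starting from $K$: set $n_i := \tilde{g}^{(i)}(K)$, so that $n_{i+1} = n_i + g(n_i)$ and the required inequality at $n = n_i$ becomes $a_{n_i} - a_{n_{i+1}} < 1/(k+1)$. Suppose, aiming at a contradiction, that $a_{n_i} - a_{n_{i+1}} \geq 1/(k+1)$ holds for every $i = 0, 1, \ldots, M-1$ with $M := \lceil c_u(k+1) \rceil$. Telescoping gives
\[
a_{n_0} - a_{n_M} = \sum_{i=0}^{M-1}\bigl(a_{n_i} - a_{n_{i+1}}\bigr) \geq \frac{M}{k+1} \geq c_u,
\]
contradicting $a_{n_0} \leq c_u$ and $a_{n_M} \geq 0$ (the latter being strict unless $a_{n_M} = 0$, in which case the metastability inequality is already trivial from that point onwards). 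Therefore some $i \leq M$ witnesses $a_{n_i} - a_{n_{i+1}} < 1/(k+1)$, and for this $i$ the point $n := n_i$ lies in $[K; \tilde{g}^{(M)}(K)] = [K; \Phi'_1(k,g,c_u,K)]$ and satisfies the desired bound.

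I expect no substantive obstacle; the only subtlety is keeping the ceiling bookkeeping consistent, in particular verifying that $M = \lceil c_u(k+1) \rceil$ iterations of $\tilde{g}$ from $K$ genuinely suffice (as opposed to $M+1$), which is the content of the cited Proposition 2.27 / Remark 2.29 from \cite{Koh2008}. The argument is otherwise a direct instance of the general template for extracting rates of metastability from monotone bounded sequences, and does not require any further input from the structure of the algorithm beyond the monotonicity supplied by Lemma~\ref{lem:normsqdec}.
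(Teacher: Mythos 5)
Your proof is correct and is essentially the argument the paper relies on: the paper simply cites Proposition 2.27 and Remark 2.29 of \cite{Koh2008} (noting that the iteration may start at an arbitrary $K$ instead of $0$), and the telescoping/pigeonhole argument on the iterates $n_i=\tilde{g}^{(i)}(K)$ that you spell out is precisely the proof behind that citation. Your handling of the boundary case $a_{n_0}=c_u$, $a_{n_M}=0$ (which can only occur when $c_u(k+1)$ is an integer) is the right way to get the strict inequality with only $\ceil*{c_u(k+1)}$ iterations.
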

\begin{proof}
The proof given in \cite{Koh2008}, Proposition 2.27 and Remark 2.29, only provides the case for $K=0$. It is, however, immediately apparent from the proof given there that the argument of $\tilde{g}^{(\ceil*{c_u(k+1)})}$ can be chosen to be an arbitrary $K\in\mathbb{N}$. Similarly, it follows from said proof that the resulting $n$ is then of the form $n=\tilde{g}^{(i)}(K)$ for some $i\leq\ceil*{c_u(k+1)}$. Therefore in particular also $n\geq K$ by construction of $\tilde{g}$.
\end{proof}

A lemma used in the proof (in \cite{IY2009}) of Theorem \ref{thm:main}, part (b) and (c), is the following which is a direct corollary of part (a).
\begin{lemma}[Iiduka and Yamada, \cite{IY2009}, p. 257]\label{lem:boundfsqbynormsq}
Let $u\in\Omega\neq\emptyset$ and $M>0$ with $M\geq\norm{\xi_n}$ for all $n\in\mathbb{N}$. Let further $\lambda_n\in [a,b]\subseteq (0,2/M^2)$ for all $n\in\mathbb{N}$. Then
\[
0\leq -a(M^2b-2)(f(y_n,x_n))^2\leq\norm{x_n-u}^2-\norm{x_{n+1}-u}^2
\]
for all $n\in\mathbb{N}$.
\end{lemma}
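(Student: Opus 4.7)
The plan is to read the estimate off directly from part (a) of Theorem \ref{thm:main}, exactly as the authors announce. Since $u \in \Omega = \bigcap_{n} \Omega_n$ in particular lies in $\Omega_n$ for every $n$, part (a) gives
\[
\norm{x_{n+1}-u}^2 \leq \norm{x_n-u}^2 + \lambda_n(M^2\lambda_n - 2)(f(y_n,x_n))^2,
\]
which I would immediately rearrange as
\[
\norm{x_n-u}^2 - \norm{x_{n+1}-u}^2 \geq \lambda_n(2 - M^2\lambda_n)(f(y_n,x_n))^2.
\]

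The remaining task is to replace $\lambda_n$ by the interval endpoints in the right way. From the hypothesis $\lambda_n \in [a,b] \subseteq (0,2/M^2)$ I extract $\lambda_n \geq a > 0$ for the standalone factor, and $\lambda_n \leq b < 2/M^2$ for the factor $(2 - M^2\lambda_n)$, which yields $2 - M^2\lambda_n \geq 2 - M^2 b > 0$. Since both factors are positive, one can multiply the two lower bounds to conclude $\lambda_n(2-M^2\lambda_n) \geq a(2-M^2 b) = -a(M^2 b - 2)$. Multiplying by the nonnegative quantity $(f(y_n,x_n))^2$ then gives the right inequality of the claim.

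The left inequality $0 \leq -a(M^2 b - 2)(f(y_n,x_n))^2$ of the chain comes out for free from the same sign bookkeeping, since $a > 0$, $M^2 b - 2 < 0$, and $(f(y_n,x_n))^2 \geq 0$.

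There is no real obstacle here; the argument is essentially arithmetic, matching the authors' description of the lemma as a direct corollary of part (a). The only point deserving care is the sign tracking: one must apply the lower bound $\lambda_n \geq a$ to the isolated factor $\lambda_n$ while simultaneously applying the upper bound $\lambda_n \leq b$ inside $(2 - M^2\lambda_n)$, and verify that both bounds are being taken on quantities that are already nonnegative so that the product inequality is preserved.
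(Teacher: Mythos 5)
Your argument is correct and is exactly the intended one: the paper presents this lemma as a direct corollary of Theorem \ref{thm:main}(a) (citing Iiduka and Yamada) without writing out the arithmetic, and your rearrangement together with the sign-aware substitution $\lambda_n\geq a$ and $2-M^2\lambda_n\geq 2-M^2b>0$ is precisely that computation.
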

Using this lemma, we obtain the following quantitative analysis of the convergence of $f(y_n,x_n)$ towards $0.$
\begin{proposition}[Quantitative version of Theorem \ref{thm:main}, part (b), I]
Let $u\in\Omega\neq\emptyset$, $c_u\geq\norm{x_0-u}^2$, and let $M>0$ with $M\geq\norm{\xi_n}$ for all $n\in\mathbb{N}$. Further, let $\lambda_n\in [a,b]\subseteq (0,2/M^2)$ for all $n\in\mathbb{N}$. Then for all $k\in\mathbb{N}$ and all $g\in\mathbb{N}^\mathbb{N}$:
\[ \exists n\leq\Phi_2(k^2+2k,g,c_u)\,\forall i\in[n; n+g(n)] \ 
\left(f(y_i,x_i)<\frac{1}{k+1}\right) \]
where
\[
\Phi_2(k,g,c_u):=\Phi_1\left(\ceil*{\frac{k+1}{\alpha(a,b,M)}}-1,g+1,c_u\right)\text{ and }\alpha(a,b,M):=-a(M^2b-2)
\]
with $\Phi_1$ as in Lemma \ref{lem:metastabnormsq}.
\end{proposition}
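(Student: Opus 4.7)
The plan is to feed the metastability bound for $\norm{x_n-u}^2$ from Lemma \ref{lem:metastabnormsq} into the quantitative estimate
$\alpha(a,b,M)\cdot (f(y_n,x_n))^2 \leq \norm{x_n-u}^2 - \norm{x_{n+1}-u}^2$
coming from Lemma \ref{lem:boundfsqbynormsq}. Because the squaring in this estimate is responsible for the jump from a rate of $1/(k+1)$ on $f(y_n,x_n)$ to a rate of order $1/(k+1)^2$ on the $\norm{\cdot}^2$-telescoping, the proposition is phrased in terms of $\Phi_2(k^2+2k,g,c_u)$, exploiting the identity $(k^2+2k)+1 = (k+1)^2$.

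Concretely, I would first abbreviate $\alpha := \alpha(a,b,M) = -a(M^2 b-2)$ (which is strictly positive since $b<2/M^2$), and invoke Lemma \ref{lem:metastabnormsq} with precision parameter
\[
k' := \ceil*{\tfrac{(k+1)^2}{\alpha}} - 1
\]
and with the shifted counterfunction $n\mapsto g(n)+1$. This produces an index $n \leq \Phi_1(k', g+1, c_u) = \Phi_2(k^2+2k, g, c_u)$ such that
\[
\left\vert \norm{x_i-u}^2 - \norm{x_j-u}^2 \right\vert < \frac{1}{k'+1} \leq \frac{\alpha}{(k+1)^2}
\]
for all $i,j \in [n;\, n+g(n)+1]$, where the last inequality uses $k'+1 \geq (k+1)^2/\alpha$.

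The finish is then immediate. For any $i\in [n;\, n+g(n)]$, both $i$ and $i+1$ still lie in the enlarged interval $[n;\, n+g(n)+1]$, so combining the previous display with Lemma \ref{lem:boundfsqbynormsq} yields
\[
\alpha\,(f(y_i,x_i))^2 \leq \norm{x_i-u}^2 - \norm{x_{i+1}-u}^2 < \frac{\alpha}{(k+1)^2}.
\]
Since $f(y_i,x_i)\geq 0$ holds by construction of $y_i$ in Algorithm \ref{algorithm}, dividing by $\alpha$ and taking square roots yields $f(y_i,x_i)<1/(k+1)$, as required.

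There is no real obstacle here; the one point to watch is the shift $g\mapsto g+1$, which is needed so that for the extremal index $i=n+g(n)$ the successor $i+1$ still belongs to the metastable interval provided by Lemma \ref{lem:metastabnormsq}. The substitutions $k\mapsto k^2+2k$ in the outer argument and $k'=\ceil*{(k+1)^2/\alpha}-1$ in the inner application are precisely those needed to absorb the factor $1/\alpha$ and to compensate for the squaring when passing from the metastability rate of $\norm{x_n-u}^2$ to one for $f(y_n,x_n)$.
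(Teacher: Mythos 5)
Your proposal is correct and follows essentially the same route as the paper: apply Lemma \ref{lem:metastabnormsq} with the shifted counterfunction $g+1$ and combine with Lemma \ref{lem:boundfsqbynormsq}; the paper merely phrases it by first bounding $(f(y_i,x_i))^2$ by $1/(k+1)$ with bound $\Phi_2(k,g,c_u)$ and then substituting $k\mapsto k^2+2k$, which is exactly the substitution you carry out directly via $k'=\ceil*{(k+1)^2/\alpha}-1$.
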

\begin{proof}
Let $k\in\mathbb{N}$ and $g\in\mathbb{N}^\mathbb{N}$ be arbitrary. \\ 
By Lemma \ref{lem:metastabnormsq}
\[
\exists n\leq\Phi_1\left(\ceil*{\frac{k+1}{\alpha(a,b,M)}}-1,g+1,c_u\right)
\]
such that for all $i\in [n;n+g(n)]$ (as then $i+1\in [n;n+g(n)+1]$):
\begin{align*}
(f(y_i,x_i))^2&\leq\frac{1}{-a(M^2b-2)}\left(\norm{x_i-u}^2-\norm{x_{i+1}-u}^2
\right) \\
            &<\frac{1}{-a(M^2b-2)}\frac{1}{\left(\ceil*{\frac{k+1}{\alpha(a,b,M)}}- 1\right)+1}\\
            &\leq\frac{1}{\alpha(a,b,M)}\frac{\alpha(a,b,M)}{k+1} =\frac{1}{k+1}
\end{align*}
where the first inequality follows from Lemma \ref{lem:boundfsqbynormsq}. From this the claim is immediate.
\end{proof}
\begin{lemma}[Iiduka and Yamada, \cite{IY2009}, p. 257]\label{lem:boundasym}
Let $u\in\Omega\neq\emptyset$ be arbitrary and $M>0$ with $M\geq\norm{\xi_n}$ for all $n\in\mathbb{N}$. Let further $\lambda_n\in[a,b]\subseteq (0,2/M^2)$ for all $n\in\mathbb{N}$.
\begin{enumerate}[(i)]
\item For all $n\in\mathbb{N}$:
\[
\norm{x_n-x_{n+1}}^2\leq\norm{x_n-u}^2-\norm{x_{n+1}-u}^2+2Mbf(y_n,x_n)\norm{x_n-x_{n+1}}.
\]
In particular, if $L\geq\mathrm{diam}\{x_n\mid n\in\mathbb{N}\}$:
\[
\norm{x_n-x_{n+1}}^2\leq\norm{x_n-u}^2-\norm{x_{n+1}-u}^2+2MbLf(y_n,x_n).
\]
\item For all $n\in\mathbb{N}$:
\[
\norm{x_{n+1}-Tx_{n+1}}\leq\norm{x_n-x_{n+1}}+Mbf(y_n,x_n).
\]
\end{enumerate}
\end{lemma}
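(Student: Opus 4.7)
The plan is to unfold the update $x_{n+1}=T(z_n)$ with $z_n:=x_n-\lambda_nf(y_n,x_n)\xi_n$ and combine the firm nonexpansivity of $T$ with the subgradient inequality for $\xi_n\in\partial f(y_n,\cdot)(x_n)$ evaluated at the point $u\in\Omega_n$.

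For part (i), firm nonexpansivity of $T$ together with $Tu=u$ yields
\[
\norm{x_{n+1}-u}^2=\norm{Tz_n-Tu}^2\leq\langle z_n-u,x_{n+1}-u\rangle.
\]
Substituting $z_n-u=(x_n-u)-\lambda_nf(y_n,x_n)\xi_n$ and applying the polarization identity
\[
2\langle x_n-u,x_{n+1}-u\rangle=\norm{x_n-u}^2+\norm{x_{n+1}-u}^2-\norm{x_n-x_{n+1}}^2,
\]
I would rearrange to obtain
\[
\norm{x_n-x_{n+1}}^2\leq\norm{x_n-u}^2-\norm{x_{n+1}-u}^2-2\lambda_nf(y_n,x_n)\langle\xi_n,x_{n+1}-u\rangle.
\]
The key move is now to split $\langle\xi_n,x_{n+1}-u\rangle=\langle\xi_n,x_n-u\rangle+\langle\xi_n,x_{n+1}-x_n\rangle$. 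The first summand is controlled by the defining subgradient inequality $f(y_n,u)\geq f(y_n,x_n)+\langle\xi_n,u-x_n\rangle$, which combined with $f(y_n,u)\leq 0$ (since $u\in\Omega_n$) and $f(y_n,x_n)\geq 0$ gives $\langle\xi_n,x_n-u\rangle\geq f(y_n,x_n)\geq 0$, so $-2\lambda_nf(y_n,x_n)\langle\xi_n,x_n-u\rangle\leq 0$ and may be dropped. The second summand is handled by Cauchy--Schwarz together with $\norm{\xi_n}\leq M$ and $\lambda_n\leq b$, producing the desired $+2Mbf(y_n,x_n)\norm{x_n-x_{n+1}}$. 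The particular case with the diameter bound $L$ is then immediate from $\norm{x_n-x_{n+1}}\leq L$ applied on the right-hand side.

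For part (ii), since $T$ is in particular nonexpansive,
\[
\norm{x_{n+1}-Tx_{n+1}}=\norm{Tz_n-Tx_{n+1}}\leq\norm{z_n-x_{n+1}},
\]
and the triangle inequality with $\norm{z_n-x_n}=\lambda_nf(y_n,x_n)\norm{\xi_n}\leq Mbf(y_n,x_n)$ finishes the claim.

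The main obstacle is the decomposition of $x_{n+1}-u$ used in (i): a direct Cauchy--Schwarz applied to $\langle\xi_n,x_{n+1}-u\rangle$ would introduce $\norm{x_{n+1}-u}$ on the right-hand side rather than $\norm{x_n-x_{n+1}}$, which would destroy the telescoping structure needed in the quantitative arguments to come. Splitting through the intermediate point $x_n$ lets the subgradient inequality absorb the ``bad'' direction and leaves only a harmless multiple of $\norm{x_n-x_{n+1}}$.
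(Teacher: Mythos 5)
Your proof is correct. Note that the paper itself states this lemma without proof, importing it from Iiduka--Yamada \cite{IY2009}; your derivation via firm nonexpansivity at $u\in\mathrm{Fix}(T)$, the polarization identity, and the split $\langle\xi_n,x_{n+1}-u\rangle=\langle\xi_n,x_n-u\rangle+\langle\xi_n,x_{n+1}-x_n\rangle$ (with the first term discarded using the subgradient inequality, $f(y_n,u)\leq 0$ and $f(y_n,x_n)\geq 0$) is a valid and essentially the standard argument, and part (ii) is the routine nonexpansivity-plus-triangle-inequality step.
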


\begin{proposition}[Quantitative version of Theorem \ref{thm:main}, part (b), II]\label{prop:quantverbII}
Let $u\in\Omega\neq\emptyset$, $c_u\geq\norm{x_0-u}^2$, $L\geq\mathrm{diam}\{x_n\mid n\in\mathbb{N}\}$ as well as $M>0$ with $M\geq\norm{\xi_n}$ for all $n\in\mathbb{N}$. Further, let $\lambda_n\in[a,b]\subseteq (0,2/M^2)$ for all $n\in\mathbb{N}$. Then, for all $k\in\mathbb{N}$ and all $g\in\mathbb{N}^\mathbb{N}$:
\[ \exists n\leq\Phi_3(k,g,c_u)\,\forall i\in[n;n+g(n)]\ 
\left(\norm{x_{i}-Tx_{i}}<\frac{1}{k+1}\right), \]
where we have
\[
\Phi_3(k,g,c_u):=\Phi_1\left(\ceil*{(\eta(a,b,M,L))^4(k+1)^4}- 1,g',c_u\right)+1
\]
with $g'(n):=g(n+1)+1$ as well as
\[
\eta(a,b,M,L):=\left(\frac{\sqrt{2MbL}}{\sqrt[4]{\alpha(a,b,M)}}+\frac{Mb}{\sqrt{\alpha(a,b,M)}}+1\right)\text{ and }\alpha(a,b,M):=-a(M^2b-2)
\]
with $\Phi_1$ as in Lemma \ref{lem:metastabnormsq}.
\end{proposition}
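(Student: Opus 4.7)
The plan is to obtain smallness of $\norm{x_i-Tx_i}$ on a metastable window by combining Lemma~\ref{lem:boundasym}(ii), which controls $\norm{x_{i+1}-Tx_{i+1}}$ via $\norm{x_i-x_{i+1}}$ and $f(y_i,x_i)$, with Lemma~\ref{lem:boundasym}(i) and Lemma~\ref{lem:boundfsqbynormsq}, which each bound one of these two quantities in terms of the telescoping difference $\norm{x_i-u}^2-\norm{x_{i+1}-u}^2$. Since the sequence $(\norm{x_n-u}^2)$ is monotone decreasing by Lemma~\ref{lem:normsqdec}, its telescoping differences can be made uniformly small on prescribed windows via the rate of metastability in Lemma~\ref{lem:metastabnormsq}.

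Concretely, given $k\in\mathbb{N}$ and $g\in\mathbb{N}^\mathbb{N}$, I would apply Lemma~\ref{lem:metastabnormsq} with the shifted counterfunction $g'(n):=g(n+1)+1$ and with accuracy parameter $\ceil*{\eta(a,b,M,L)^4(k+1)^4}-1$, obtaining some $n'\leq\Phi_1(\ceil*{\eta^4(k+1)^4}-1,g',c_u)$ satisfying
\[
\bigl|\norm{x_i-u}^2-\norm{x_j-u}^2\bigr|<\frac{1}{\eta^4(k+1)^4}\quad\text{for all }i,j\in[n';n'+g'(n')].
\]
Setting $n:=n'+1$, we have $n\leq\Phi_3(k,g,c_u)$; by the choice of $g'$, for every $i\in[n;n+g(n)]$ both $i-1$ and $i$ lie in $[n';n'+g'(n')]$, and monotonicity of $(\norm{x_n-u}^2)$ yields $\norm{x_{i-1}-u}^2-\norm{x_i-u}^2<1/(\eta^4(k+1)^4)$ for each such $i$.

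From this, Lemma~\ref{lem:boundfsqbynormsq} gives $f(y_{i-1},x_{i-1})<1/(\sqrt{\alpha}\,\eta^2(k+1)^2)$, while Lemma~\ref{lem:boundasym}(i) combined with the elementary inequality $\sqrt{a+b}\leq\sqrt{a}+\sqrt{b}$ gives
\[
\norm{x_{i-1}-x_i}<\frac{1}{\eta^2(k+1)^2}+\frac{\sqrt{2MbL}}{\alpha^{1/4}\,\eta(k+1)}.
\]
Substituting both bounds into Lemma~\ref{lem:boundasym}(ii) and factoring $1/(\eta(k+1))$ out of the resulting three summands produces
\[
\norm{x_i-Tx_i}<\frac{1}{\eta(k+1)}\left(\frac{1}{\eta(k+1)}+\frac{\sqrt{2MbL}}{\alpha^{1/4}}+\frac{Mb}{\sqrt{\alpha}\,\eta(k+1)}\right),
\]
and since $\eta\geq 1$ bounds the bracketed quantity term-by-term by $1+\sqrt{2MbL}/\alpha^{1/4}+Mb/\sqrt{\alpha}=\eta$, one concludes $\norm{x_i-Tx_i}<1/(k+1)$ as desired.

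The main technical point is the bookkeeping for two chained square-root losses: one in passing from the telescoping difference to $f(y_i,x_i)$ via Lemma~\ref{lem:boundfsqbynormsq}, and another to $\norm{x_i-x_{i+1}}$ via Lemma~\ref{lem:boundasym}(i). These together are what force the fourth power $\eta^4(k+1)^4$ in the precision passed to $\Phi_1$, and the exact shape of $\eta(a,b,M,L)$ is precisely what is needed so that all error terms collapse back to $1/(k+1)$ at the end. The single-index shift encoded in $g'$ and the additive $+1$ in $\Phi_3$ reflect the fact that Lemma~\ref{lem:boundasym}(ii) bounds $\norm{x_{i+1}-Tx_{i+1}}$ in terms of data at index $i$, so both indices must fit inside the metastable window produced by Lemma~\ref{lem:metastabnormsq}.
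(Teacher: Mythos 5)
Your proof is correct and follows essentially the same route as the paper: the same chain of Lemmas~\ref{lem:boundfsqbynormsq} and~\ref{lem:boundasym}, the same precision $\ceil*{\eta^4(k+1)^4}-1$ fed into $\Phi_1$ with the shifted counterfunction $g'$, and the same index shift $n:=n'+1$. The only difference is cosmetic: the paper first consolidates the estimates into one inequality $\norm{x_{n+1}-Tx_{n+1}}\leq\frac{\sqrt{2MbL}}{\sqrt[4]{\alpha}}\sqrt[4]{\Delta_{u,n}}+(\frac{Mb}{\sqrt{\alpha}}+1)\sqrt{\Delta_{u,n}}$ before invoking metastability, whereas you substitute the numerical bounds term by term at the end.
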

\begin{proof}
Let $k\in\mathbb{N}$ and $g\in\mathbb{N}^\mathbb{N}$ be arbitrary. As an abbreviation, we write
\[
\Delta_{u,n}:=\norm{x_n-u}^2-\norm{x_{n+1}-u}^2.
\]
Using Lemma \ref{lem:boundfsqbynormsq}, we at first have
\[
0\le f(y_n,x_n)\leq\frac{1}{\sqrt{\alpha(a,b,M)}}\sqrt{\Delta_{u,n}}.\tag{$*_1$}
\]
Further, we have (using Lemma \ref{lem:boundasym}) for any $u\in\Omega$:
\begin{align*}
\norm{x_{n+1}-Tx_{n+1}}&\leq\norm{x_n-x_{n+1}}+Mbf(y_n,x_n)\\
                       &\leq\sqrt{\Delta_{u,n}}+\sqrt{2MbL}\sqrt{f(y_n,x_n)}+Mbf(y_n,x_n)
\end{align*}
and therefore (using ($*_1$)):
\[
\norm{x_{n+1}-Tx_{n+1}}\leq\frac{\sqrt{2MbL}}{\sqrt[4]{\alpha(a,b,M)}}\sqrt[4]{\Delta_{u,n}}+\left(\frac{Mb}{\sqrt{\alpha(a,b,M)}}+1\right)\sqrt{\Delta_{u,n}}\tag{$*_2$}
\]
for all $n\in\mathbb{N}$.

By Lemma \ref{lem:metastabnormsq}, we have that
\[
\exists m\leq\Phi_3(k,g,c_u)-1
\] 
such that (using ($*_2$)) for all $i\in [m;m+g(m+1)]$:
\begin{align*}
\norm{x_{i+1}-Tx_{i+1}}&<\frac{\sqrt{2MbL}}{\sqrt[4]{\alpha(a,b,M)}}\frac{1}{\eta(a,b,M,L)(k+1)}+\left(\frac{Mb}{\sqrt{\alpha(a,b,M)}}+1\right)\frac{1}{(\eta(a,b,M,L))^2(k+1)^2}\\
                       &\leq\left(\frac{\sqrt{2MbL}}{\sqrt[4]{\alpha(a,b,M)}}+\frac{Mb}{\sqrt{\alpha(a,b,M)}}+1\right)\frac{1}{\eta(a,b,M,L)(k+1)}
                 \leq\frac{1}{k+1}.
\end{align*}
If we define $n=m+1$, then $n\leq\Phi_3(k,g,c_u)$ and
\[
i\in [n;n+g(n)]\Rightarrow i-1\in [m;m+g(m+1)]
\]
and thus by the above we have
\[
\norm{x_{i}-Tx_{i}}<\frac{1}{k+1}
\]
for all $i\in [n;n+g(n)]$.
\end{proof}
\begin{remark}
Note that a bound $L$ on the diameter of $(x_n)_{n\in\mathbb{N}}$ as 
used in Proposition \ref{prop:quantverbII} 
as an input can actually be obtained in terms of $c_u$ by setting $L:=2\sqrt{c_u}$ as we have:
\[
\norm{x_n-x_m}\leq\norm{x_n-u}+\norm{x_m-u}\leq\norm{x_0-u}+\norm{x_0-u}\\ \leq 2\sqrt{c_u}. \]
\end{remark}
To obtain a rate of metastability for the sequence $(x_n)_{n\in\mathbb{N}}$, we apply recent results of Kohlenbach, Leu\c{s}tean and Nicolae \cite{KLN2018} on Fej\'er-monotone sequences. Other examples of application of these recent results are especially the derivation of a quantitative version of asymptotic regularity of compositions of two mappings (see \cite{KLAN2017}). We recall the definition of Fej\'er monotonicity.
\begin{definition}
Let $(X,d)$ be a metric space, $F\subseteq X$ nonempty and $(x_n)_{n\in\mathbb{N}}$ be a sequence in $X$. $(x_n)_{n\in\mathbb{N}}$ is called Fej\'er-monotone with respect to $F$, if for all $n\in\mathbb{N}$ and all $p\in F$:
\[
d(x_{n+1},p)\leq d(x_n,p).
\]
\end{definition}
The authors in  \cite{KLN2018} actually introduce a \emph{generalized} form of Fej\'er monotonicity, but for the purpose of this work, the above is enough. However, we pass to the notion of \emph{uniform} Fej\'er monotonicity, as introduced in \cite{KLN2018}, to formulate the (following) quantitative results.

For this, one considers approximations of the approached set $F$ in form of a descending sequence of sets
\[
AF_k\supseteq AF_{k+1}
\]
for $k\in\mathbb{N}$ with
\[
F=\bigcap_{k\in\mathbb{N}}AF_k.
\]
\begin{definition}
$(x_n)_{n\in\mathbb{N}}$ is called \emph{uniformly} Fej\'er monotone with respect to $F$ and $(AF_k)_{k\in\mathbb{N}}$ if for all $r,n,m\in\mathbb{N}$:
\[
\exists k\in\mathbb{N}\,\forall p\in AF_k\,\forall l\leq m\left(d(x_{n+l},p)<d(x_n,p)+\frac{1}{r+1}\right).
\]
Any function $\chi(n,m,r)$ producing such a $k\in\mathbb{N}$ is called a \emph{modulus of $(x_n)_{n\in\mathbb{N}}$ being uniformly Fej\'er monotone}.
\end{definition}
Under the assumption of $(X,d)$ being boundedly compact, the authors of \cite{KLN2018} obtain (in a slightly generalized setting) an explicit effective 
rate of metastability for the sequence $(x_n)_{n\in\mathbb{N}}.$ 
This rate only depends on the particular uniform quantitative reformulations of the assumptions of the setting such as a modulus of uniform Fej\'er monotonicity and some further quantitative information on the space $(X,d)$ and on how the sequence $(x_n)_{n\in\mathbb{N}}$ approaches the set $F$.\\

With quantitative information on the space $(X,d)$ we here mean explicitly a \emph{modulus of total boundedness} (as defined in \cite{KLN2018}) or a `modulus of bounded compactness'. 
This will be discussed in the proof of Theorem \ref{thm:cquant}.

With quantitative information on how the sequence $(x_n)_{n\in\mathbb{N}}$ approaches the set $F$, we mean a bound on $(x_n)_{n\in\mathbb{N}}$ having \emph{approximate $F$-points}. For this, recall the following definition from \cite{KLN2018}:
\begin{definition}\label{def:appfixedpoints}
$(x_n)_{n\in\mathbb{N}}$ has \emph{approximate} $F$-points if $\forall k\in\mathbb{N}\,\exists N\in\mathbb{N}\left(x_N\in AF_k\right)$. A bound $\Phi(k)$ on `$\exists N\in\mathbb{N}$` is called an \emph{approximate $F$-point bound}.
\end{definition}
As a feasibility check on whether these results can be applied and whether the setup of \cite{IY2009} fits into the above framework, note that part (a) of Theorem \ref{thm:main} can be seen (modulo some refinement of the approximations $\Omega_n$) as hinting the uniform Fej\'er monotonicity of $(x_n)_{n\in\mathbb{N}}$ (as the sequence from Algorithm \ref{algorithm}) with respect to the set $\Omega$ 
being taken as $F.$

We at first focus on whether (quantitative versions of) these properties of uniform Fej\'er monotonicity and approximate $F$-/$\Omega$-points can be obtained by suitably modifying the approximations $\Omega_n$.\\

For this, we need to weaken the conditions of $\Omega_n$ to allow $(x_n)_{n\in\mathbb{N}}$ to lie in them further along the approximation. As none of these $x_n$ is expected to be a fixed point of $T$ or to satisfy $f(y_m,x_n)\leq 0$, we weaken these properties to that of \emph{approximate fixed point} and $f(y_m,u)\leq\frac{1}{k+1}$, respectively. Part (b) of Theorem \ref{thm:main} gives, as a feasibility check, that in the long run $f(y_n,x_n)$ is expected to decrease and that the sequence $x_n$ contains better and better approximate fixed points of $T$.\\

Using this motivation, we define
\[
\Omega'_k:=\biggl\{u\in\mathbb{R}^N\mathrel{\Big|}\norm{u-Tu}\leq\frac{1}{k+1}\text{ and }f(y_j,u)\leq\frac{1}{k+1}\text{ for all }j\leq k\biggr\}
\] which plays the role of $AF_k.$
By construction, we naturally have that $(\Omega'_k)_{k\in\mathbb{N}}$ is descending and 
\[ \Omega=\bigcap_{k\in\mathbb{N}}\Omega'_k.
\]
Further, we obtain the following lemma giving a quantitative version of $(x_n)_{n\in\mathbb{N}}$ having approximate $\Omega$-points with respect to $\left(\Omega'_k\right)_{k\in\mathbb{N}}$ (modulo some quantitative reformulations of the parameters of Algorithm \ref{algorithm}).
\begin{lemma}\label{lem:appomegapointbound}
Let $u\in\Omega\neq\emptyset$ with $c_u\geq\norm{x_0-u}^2$ and let $M>0$ with $M\geq\norm{\xi_n}$ for all $n\in\mathbb{N}$. Let $\lambda_n\in [a,b]\subseteq (0,2/M^2)$ for all $n\in\mathbb{N}$ and let $L\geq\mathrm{diam}\{x_n\mid n\in\mathbb{N}\}$. Further, let $\varepsilon_n\geq 0$ for all $n\in\mathbb{N}$ and let $\varepsilon_n\to 0$ ($n\to\infty$) where $\tau$ is a nondecreasing \emph{rate of convergence} for $\varepsilon_n\to 0$ ($n\to\infty$), that is $\tau(k+1)\ge \tau (k)$ and 
\[
\forall k\in\mathbb{N}\, \forall n\geq\tau(k)\left(\varepsilon_n\leq\frac{1}{k+1}\right).
\]
Then 
\[
\forall k\in\mathbb{N}\,\exists n\leq\Phi(k,a,b,M,L,\tau,c_u)\left(
x_n\in\Omega'_k\right),
\]
where
\[
\Phi(k,a,b,M,L,\tau,c_u):=2\ceil*{c_u\cdot(\sigma(a,b,M,L))^416(k+1)^4}+\max\{k,\tau(2k+1)\}+1\] with 
\[
\sigma(a,b,M,L):=\ceil*{\frac{\sqrt{2MbL}}{\sqrt[4]{\alpha(a,b,M)}}+\frac{Mb+1}{\sqrt{\alpha(a,b,M)}}+1}\text{ and }\alpha(a,b,M):=-a(M^2b-2).
\]
\end{lemma}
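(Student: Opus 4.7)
My plan is to locate an index $n$ inside the prescribed budget at which both consecutive increments $\Delta_{u,n-1}$ and $\Delta_{u,n}$ are simultaneously small, where $\Delta_{u,i}:=\norm{x_i-u}^2-\norm{x_{i+1}-u}^2$ as in the proof of Proposition \ref{prop:quantverbII}. Once such an $n$ is in hand, the estimate $(*_2)$ of that proof (applied with $n-1$ in place of $n$) will turn the smallness of $\Delta_{u,n-1}$ into $\norm{x_n-Tx_n}\leq\frac{1}{k+1}$, while $(*_1)$ will turn the smallness of $\Delta_{u,n}$ into $f(y_n,x_n)$ being small; combining this with the approximate-maximum property defining $y_n$ (namely $f(y_j,x_n)\leq f(y_n,x_n)+\varepsilon_n$ whenever $y_j\in K_n$) and with the rate $\tau$ on $\varepsilon_n\to 0$, I then expect $f(y_j,x_n)\leq\frac{1}{k+1}$ for every $j\leq k$.

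To produce such an $n$, I would set $N_0:=\max\{k,\tau(2k+1)\}$ and $K:=\ceil*{c_u\cdot\sigma^4\cdot 16(k+1)^4}$, consider the $2K$ consecutive indices $N_0,N_0+1,\ldots,N_0+2K-1$, and partition them into $K$ disjoint consecutive pairs $P_i:=\{N_0+2i,N_0+2i+1\}$ for $0\leq i\leq K-1$. By Lemma \ref{lem:normsqdec} the increments $\Delta_{u,\cdot}$ are non-negative with telescoping sum $\norm{x_{N_0}-u}^2-\norm{x_{N_0+2K}-u}^2\leq c_u$, so by pigeonhole some pair $P_{i_0}$ satisfies $\Delta_{u,N_0+2i_0}+\Delta_{u,N_0+2i_0+1}\leq c_u/K\leq\frac{1}{16\sigma^4(k+1)^4}$, and non-negativity then forces each summand to be individually bounded by the same quantity. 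Setting $n:=N_0+2i_0+1$, I obtain $n\leq N_0+2K-1<\Phi(k,a,b,M,L,\tau,c_u)$ with both $\Delta_{u,n-1}$ and $\Delta_{u,n}$ suitably small.

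It remains to verify the two membership conditions of $\Omega'_k$. The bound $\norm{x_n-Tx_n}\leq\frac{1}{k+1}$ will follow by inserting $\Delta_{u,n-1}\leq\frac{1}{16\sigma^4(k+1)^4}$ into $(*_2)$ and using that $\sigma$ dominates each of the summands in its definition, just as in the proof of Proposition \ref{prop:quantverbII}. For $f(y_j,x_n)\leq\frac{1}{k+1}$ with $j\leq k$: first, $n\geq N_0+1\geq k+1$ combined with monotonicity of the radii $\rho_\cdot$ gives $y_j\in K_j\subseteq K_n$, so the defining property of $y_n$ yields $f(y_j,x_n)\leq f(y_n,x_n)+\varepsilon_n$; then $(*_1)$ applied to $\Delta_{u,n}$ gives $f(y_n,x_n)\leq\frac{1}{4(k+1)^2}$, and $n\geq\tau(2k+1)$ together with the definition of the nondecreasing rate $\tau$ gives $\varepsilon_n\leq\frac{1}{2(k+1)}$, summing to at most $\frac{3}{4(k+1)}\leq\frac{1}{k+1}$.

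The hard part will be realizing that one must control \emph{two} adjacent increments simultaneously — namely $\Delta_{u,n-1}$ for $\norm{x_n-Tx_n}$ via $(*_2)$, and $\Delta_{u,n}$ for $f(y_n,x_n)$ via $(*_1)$ — and that a naive pigeonhole yielding many small $\Delta_u$'s does not automatically produce two consecutive small ones. The clean remedy is to pair up disjoint consecutive indices and apply pigeonhole to the pairs, which is also precisely what explains the factor of $2$ in the quantitative bound $\Phi$.
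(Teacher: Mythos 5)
Your proof is correct and follows essentially the same route as the paper: the paper obtains the two consecutive small increments $\Delta_{u,n},\Delta_{u,n+1}$ by invoking Lemma \ref{lem:metastabnormsq} with the constant counterfunction $\overline{2}:n\mapsto 2$ and starting index $K=\max\{k,\tau(2k+1)\}$, which, once unwound, is exactly your pigeonhole over disjoint pairs of consecutive increments (and is indeed what produces the factor $2$ in $\Phi$, as you observed). The subsequent use of $(*_1)$, $(*_2)$, the approximate-maximum property of $y_n$ together with $K_j\subseteq K_n$, and the rate $\tau$ is identical to the paper's, up to an index shift (the paper exhibits $x_{n+1}\in\Omega'_k$ where you exhibit $x_n$).
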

\begin{proof}
Let $k\in\mathbb{N}$. We again write
\[
\Delta_{u,n}:=\norm{x_n-u}^2-\norm{x_{n+1}-u}^2.
\]
Again by Lemma \ref{lem:boundfsqbynormsq}, we have
\[ 0\le 
f(y_{n+1},x_{n+1})\leq\frac{1}{\sqrt{\alpha(a,b,M)}}\sqrt{\Delta_{u,n+1}}.\tag{$*_1$}
\]
and as in the proof of Proposition \ref{prop:quantverbII}, we obtain
\[
\norm{x_{n+1}-Tx_{n+1}}\leq\frac{\sqrt{2MbL}}{\sqrt[4]{\alpha(a,b,M)}}\sqrt[4]{\Delta_{u,n}}+\left(\frac{Mb}{\sqrt{\alpha(a,b,M)}}+1\right)\sqrt{\Delta_{u,n}}.\tag{$*_2$}
\]
Note, that for $\overline{2}:n\mapsto 2$ we have
\[
\Phi'_1(k,\overline{2},c_u,K)=2\ceil*{c_u(k+1)}+K
\]
for any $K\in\mathbb{N}$ and so 
\[
\Phi(k,a,b,M,L,\tau,c_u)=\Phi'_1((\sigma(a,b,M,L))^416(k+1)^4-1,\overline{2},c_u,\max\{k,\tau(2k+1)\})+1.
\]
Hence by Lemma \ref{lem:metastabnormsq} (applied to $j:=i+1$ and 
$K:=\max\{ k,\tau(2k+1)\}$), we have that \\
$\exists n\in [K;\Phi(k,a,b,M,L,\tau,c_u)-1]\,\forall i\in [n;n+1]$
\[
\left(\Delta_{u,i}=\norm{x_i-u}^2-\norm{x_{i+1}-u}^2<\frac{1}{\sigma^4(a,b,M,L)16(k+1)^4}\right).
\]
By $n+1\geq n\geq\max\{k,\tau(2k+1)\}$ we get $\varepsilon_{n+1}\leq\frac{1}{2(k+1)}$. Therefore, using ($*_1$) and ($*_2$):
\begin{align*}
&\norm{x_{n+1}-Tx_{n+1}}+f(y_{n+1},x_{n+1})+\varepsilon_{n+1}\\
&\qquad<\frac{\sqrt{2MbL}}{\sqrt[4]{\alpha(a,b,M)}}\frac{1}{\sigma(a,b,M,L)2(k+1)}+\left(\frac{Mb+1}{\sqrt{\alpha(a,b,M)}}+1\right)\frac{1}{(\sigma(a,b,M,L))^24(k+1)^2}+\frac{1}{2(k+1)}\\
&\qquad\leq\ceil*{\frac{\sqrt{2MbL}}{\sqrt[4]{\alpha(a,b,M)}}+\frac{Mb+1}{\sqrt{\alpha(a,b,M)}}+1}\frac{1}{\sigma(a,b,M,L)2(k+1)}+\frac{1}{2(k+1)}=\frac{1}{k+1}
\end{align*}
as $\sigma(a,b,M,L)\geq 1$.\\

By the above, we have
\[
\norm{x_{n+1}-Tx_{n+1}}\leq\frac{1}{k+1}\text{ as well as }f(y_{n+1},x_{n+1})+\varepsilon_{n+1}\leq\frac{1}{k+1},
\]
separately as $f(y_{n+1},x_{n+1})\geq 0$ by definition from Algorithm \ref{algorithm}. Also by the definition of Algorithm \ref{algorithm}, we have
\[
\max_{y\in K_{n+1}}f(y,x_{n+1})\leq f(y_{n+1},x_{n+1})+\varepsilon_{n+1}\leq\frac{1}{k+1}.
\]
By definition of the $K_j$, as $K_j\subseteq K_{j+1}$ and $y_j\in K_j$, we have $y_0,\dots,y_{n+1}\in K_{n+1}$.  Therefore, we have especially
\[
f(y_j,x_{n+1})\leq\frac{1}{k+1}
\]
for all $j\leq n+1$ and as $n+1\geq n\geq\max\{k,\tau(2k+1)\}\geq k$ by definition of $n$, we have $x_{n+1}\in\Omega'_k$ as well as $n+1\le \Phi(k,a,b,M,L,\tau,c_u).$
\end{proof}
The next two lemmas now give the quantitative version of the uniform Fej\'er monotonicity of $(x_n)_{n\in\mathbb{N}}$ with respect to $\Omega$ and $\left(\Omega'_k\right)_{k\in\mathbb{N}}$.
\begin{lemma}\label{lem:boundfejermono}
Let $M>0$ with $M\geq\norm{\xi_n}$ for all $n\in\mathbb{N}$ and let $\lambda_n\in[a,b]\subseteq (0,2/M^2)$ for all $n\in\mathbb{N}$. Now, let $n\in\mathbb{N}$ be fixed. For any $k\geq n$ and any $u\in\Omega'_k$:
\[
\norm{x_{n+1}-u}^2\leq\left(\norm{x_n-u}+\frac{1}{k+1}\right)^2+\frac{1}{k+1}f(y_n,x_n)2b(1+M).
\]
In particular, we have
\[
\norm{x_{n+l}-u}\leq\norm{x_n-u}+\frac{l}{k+1}+\frac{\sqrt{2b(1+M)}}{\sqrt{k+1}}\sum_{j=0}^{l-1}\sqrt{f(y_{n+j},x_{n+j})}
\]
for all $u\in\Omega'_k$ and for all $l\in\mathbb{N}$ with $n+l\leq k+1$ (where for $l=0$ the sum is $0$).
\end{lemma}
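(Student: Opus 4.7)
The plan is to generalize the computation in part (a) of Theorem \ref{thm:main} by replacing the two exact conditions $Tu=u$ and $f(y_n,u)\leq 0$ (which define $\Omega$) by their $(k+1)^{-1}$-approximate versions available for $u\in\Omega'_k$: namely $\norm{Tu-u}\leq (k+1)^{-1}$ and $f(y_n,u)\leq (k+1)^{-1}$, where the second estimate applies precisely because the hypothesis $k\geq n$ ensures $n$ lies among the indices $j\leq k$ entering the definition of $\Omega'_k$. Set $z_n:=x_n-\lambda_nf(y_n,x_n)\xi_n$, so that $x_{n+1}=Tz_n$.

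The first step is to bound $\norm{z_n-u}^2$. Expanding the square and using the subgradient inequality $\langle \xi_n,x_n-u\rangle\geq f(y_n,x_n)-f(y_n,u)$ (valid since $\xi_n\in\partial f(y_n,\cdot)(x_n)$ and $f(y_n,x_n)\geq 0$) together with $\norm{\xi_n}\leq M$ yields a direct analogue of Theorem \ref{thm:main}(a):
\[
\norm{z_n-u}^2 \leq \norm{x_n-u}^2 + \lambda_n(M^2\lambda_n-2)f(y_n,x_n)^2 + 2\lambda_n f(y_n,x_n)f(y_n,u).
\]
The middle term is non-positive since $\lambda_n\in(0,2/M^2)$, so dropping it and using $f(y_n,u)\leq 1/(k+1)$ and $\lambda_n\leq b$ gives $\norm{z_n-u}^2\leq\norm{x_n-u}^2+\frac{2b}{k+1}f(y_n,x_n)$. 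Separately, the triangle inequality yields $\norm{z_n-u}\leq\norm{x_n-u}+bMf(y_n,x_n)$.

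The second step passes from $z_n$ to $x_{n+1}=Tz_n$. Since $T$ is nonexpansive, the identity $\norm{Tz_n-u}^2=\norm{Tz_n-Tu}^2+2\langle Tz_n-Tu,Tu-u\rangle+\norm{Tu-u}^2$ together with Cauchy--Schwarz and $\norm{Tu-u}\leq 1/(k+1)$ gives
\[
\norm{x_{n+1}-u}^2\leq\norm{z_n-u}^2+\frac{2}{k+1}\norm{z_n-u}+\frac{1}{(k+1)^2}.
\]
Substituting the quadratic bound for $\norm{z_n-u}^2$ and the linear bound for $\norm{z_n-u}$ from the first step and regrouping $\norm{x_n-u}^2+\frac{2\norm{x_n-u}}{k+1}+\frac{1}{(k+1)^2}=\bigl(\norm{x_n-u}+\frac{1}{k+1}\bigr)^2$ produces exactly the claimed inequality, the leftover terms contributing $\frac{2b}{k+1}f(y_n,x_n)+\frac{2bM}{k+1}f(y_n,x_n)=\frac{2b(1+M)}{k+1}f(y_n,x_n)$.

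For the ``in particular'' statement, we apply the elementary bound $\sqrt{A^2+B}\leq A+\sqrt{B}$ (for $A,B\geq 0$) to the square root of the first inequality, obtaining the additive recursion
\[
\norm{x_{n+1}-u}\leq\norm{x_n-u}+\frac{1}{k+1}+\sqrt{\frac{2b(1+M)}{k+1}}\sqrt{f(y_n,x_n)},
\]
which we then iterate by induction on $l$. The hypothesis $n+l\leq k+1$ is exactly what is needed so that every intermediate index $n+j$ (for $0\leq j<l$) still satisfies $n+j\leq k$, allowing the first estimate to be reapplied at each stage with the same $k$. The main technical obstacle is the bookkeeping: keeping the two sources of $1/(k+1)$-error (from $\norm{Tu-u}$ and from $f(y_n,u)$) cleanly separated so that the cross term $2\norm{x_n-u}/(k+1)$ completes the square on the right-hand side as shown.
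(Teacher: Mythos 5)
Your proposal is correct and follows essentially the same route as the paper's proof: bound $\norm{z_n-u}^2$ via the subgradient inequality (keeping the extra term $2\lambda_nf(y_n,x_n)f(y_n,u)\leq \frac{2b}{k+1}f(y_n,x_n)$), bound $\norm{z_n-u}$ by the triangle inequality, account for the approximate fixed-point defect $\norm{Tu-u}\leq\frac{1}{k+1}$ when passing through $T$, and complete the square; the iterated form then follows from $\sqrt{A^2+B}\leq A+\sqrt{B}$ and induction exactly as in the paper. Your only cosmetic deviation is deriving the cross term via the inner-product expansion and Cauchy--Schwarz rather than squaring the triangle inequality $\norm{Tz_n-u}\leq\norm{z_n-u}+\norm{u-Tu}$, which yields the identical bound.
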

\begin{proof}
We give a quantitative analysis of the proof of (3.6) in \cite{IY2009}. At first, note that
$\xi_n\in\partial f(y_n,\cdot)(x_n)$ by the definition of Algorithm \ref{algorithm}. Thus, by the definition of the subgradient, we have especially
\[
f(y_n,u)\geq f(y_n,x_n)+\langle u-x_n,\xi_n\rangle
\]
and thus
\[
\langle x_n-u,\xi_n\rangle\geq f(y_n,x_n)-f(y_n,u)
\]
for all $u\in\Omega'_k$.

Therefore, we have:
\begin{align*}
&\norm{x_{n+1}-u}^2&&=&&\norm{T(x_n-\lambda_nf(y_n,x_n)\xi_n)-u}^2\\
&                  &&=   &&\norm{T(x_n-\lambda_nf(y_n,x_n)\xi_n)-Tu+Tu-u}^2\\
&                  &&\leq&&(\norm{T(x_n-\lambda_nf(y_n,x_n)\xi_n)-Tu}+\norm{u-Tu})^2\\
&                  &&\leq&&(\norm{(x_n-\lambda_nf(y_n,x_n)\xi_n)-u}+\norm{u-Tu})^2\\
&                  &&=   &&\norm{(x_n-\lambda_nf(y_n,x_n)\xi_n)-u}^2+2\norm{(x_n-\lambda_nf(y_n,x_n)\xi_n)-u}\norm{u-Tu}+\norm{u-Tu}^2\\
&                  &&\leq&&\norm{x_n-u}^2-2\lambda_nf(y_n,x_n)\langle x_n-u,\xi_n\rangle+\lambda_n^2(f(y_n,x_n))^2\norm{\xi_n}^2+\\
&                  &&    &&2\norm{u-Tu}(\norm{x_n-u}+\lambda_nf(y_n,x_n)\norm{\xi_n})+\norm{u-Tu}^2\\
&                  &&=   &&(\norm{x_n-u}+\norm{u-Tu})^2+\lambda_n^2(f(y_n,x_n))^2\norm{\xi_n}^2-2\lambda_nf(y_n,x_n)\langle x_n-u,\xi_n\rangle+\\
&                  &&    &&2\norm{u-Tu}\lambda_nf(y_n,x_n)\norm{\xi_n}\\
&                  &&\leq&&(\norm{x_n-u}+\norm{u-Tu})^2+\lambda_n^2(f(y_n,x_n))^2\norm{\xi_n}^2-2\lambda_nf(y_n,x_n)(f(y_n,x_n)-f(y_n,u))+\\
&                  &&    &&2\norm{u-Tu}\lambda_nf(y_n,x_n)\norm{\xi_n}\\
&                  &&\leq&&(\norm{x_n-u}+\norm{u-Tu})^2+\lambda_n(f(y_n,x_n))^2(\lambda_nM^2-2)+2\lambda_nf(y_n,x_n)f(y_n,u)+\\
&                  &&    &&2bMf(y_n,x_n)\frac{1}{k+1}\\
&                  &&\leq&&\left(\norm{x_n-u}+\norm{u-Tu}\right)^2+\frac{1}{k+1}f(y_n,x_n)2b(1+M)\\
&                  &&\leq&&\left(\norm{x_n-u}+\frac{1}{k+1}\right)^2+\frac{1}{k+1}f(y_n,x_n)2b(1+M).\\
\end{align*}
From this, it naturally follows that
\[
\norm{x_{n+1}-u}\leq\norm{x_n-u}+\frac{1}{k+1}+\frac{\sqrt{2b(1+M)}}{\sqrt{k+1}}\sqrt{f(y_n,x_n)}.
\]
The claim
\[
\norm{x_{n+l}-u}\leq\norm{x_n-u}+\frac{l}{k+1}+\frac{\sqrt{2b(1+M)}}{\sqrt{k+1}}\sum_{j=0}^{l-1}\sqrt{f(y_{n+j},x_{n+j})}
\]
follows from this by induction on $l\geq 1$ with $n+l\leq k+1$ (the case of $l=0$ is trivial).
\end{proof}
\begin{lemma}\label{lem:fejermonoquant}
Let $M>0$ with $M\geq\norm{\xi_n}$ for all $n\in\mathbb{N}$ and let $\lambda_n\in [a,b]\subseteq (0,2/M^2)$ for all $n\in\mathbb{N}$. Further let $e\geq f(y_n,x_n)$ for all $n\in\mathbb{N}$. Then $(x_n)_{n\in\mathbb{N}}$ is uniformly Fej\'er monotone with modulus $\chi(n,m,r)$, that is for all $r,n,m\in\mathbb{N}$:
\[
\forall u\in\Omega'_k\,\forall l\leq m\left(\norm{x_{n+l}-u}<\norm{x_n-u}+\frac{1}{r+1}\right)
\]
where
\[
k:=\chi(n,m,r):=\max\left\{n+m,\floor*{(r+1)^2m^2\left(1+\sqrt{2be(1+M)}\right)^2}\right\}.
\]
\end{lemma}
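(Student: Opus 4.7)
The plan is to derive the claim directly from the telescoped form of Lemma \ref{lem:boundfejermono}, using the uniform upper bound $e$ on $f(y_n,x_n)$ to turn a sum of $l$ square roots into a linear-in-$m$ term, and then to read off how large $k$ must be chosen so that the remaining error is strictly less than $\frac{1}{r+1}$.

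Concretely, fix $n,m,r \in \mathbb{N}$ and set $k := \chi(n,m,r)$. Since $k \geq n+m$, we have both $k \geq n$ and $n + l \leq k+1$ for every $l \leq m$, so Lemma \ref{lem:boundfejermono} applies and yields, for every $u \in \Omega'_k$ and every $l \leq m$,
\[
\norm{x_{n+l}-u} \leq \norm{x_n-u} + \frac{l}{k+1} + \frac{\sqrt{2b(1+M)}}{\sqrt{k+1}}\sum_{j=0}^{l-1}\sqrt{f(y_{n+j},x_{n+j})}.
\]
The bound $f(y_{n+j},x_{n+j}) \leq e$ together with $l \leq m$ then majorizes the error by $\frac{m}{k+1} + \frac{m\sqrt{2be(1+M)}}{\sqrt{k+1}}$.

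The case $m=0$ is trivial (only $l=0$ is allowed, and then the left side is $\norm{x_n-u}$). For $m \geq 1$, we use $\sqrt{k+1} \geq 1$ to absorb $\frac{m}{k+1} \leq \frac{m}{\sqrt{k+1}}$, so the error is at most $\frac{m(1+\sqrt{2be(1+M)})}{\sqrt{k+1}}$. By the second branch in the definition of $\chi$ and the presence of the floor, $k+1 > (r+1)^2 m^2 (1+\sqrt{2be(1+M)})^2$, hence $\sqrt{k+1} > (r+1)m(1+\sqrt{2be(1+M)})$, which makes the error strictly less than $\frac{1}{r+1}$, as required.

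There is no real obstacle here; the only delicate point is that the lemma asserts a \emph{strict} inequality, which is exactly why the definition of $\chi$ uses the floor of $(r+1)^2 m^2 (1+\sqrt{2be(1+M)})^2$ rather than the real number itself — this guarantees $k+1$ strictly exceeds that quantity and thereby propagates strictness through the final estimate.
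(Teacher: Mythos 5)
Your proof is correct and follows essentially the same route as the paper's: apply the telescoped estimate of Lemma \ref{lem:boundfejermono} (valid since $k\geq n+m$), majorize the sum using $f(y_{n+j},x_{n+j})\leq e$, and use the second branch of $\chi$ to make the resulting error term small; your handling of the strict inequality via $k+1>\floor*{(r+1)^2m^2(1+\sqrt{2be(1+M)})^2}$ is in fact slightly more careful than the paper's, which writes $\leq$ throughout.
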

\begin{proof}
Fix $r,n,m\in\mathbb{N}$ and assume $m\geq 1$ without loss of generality. Let $k=\chi(n,m,r)$, $u\in\Omega'_k$ and $l\leq m$. By Lemma \ref{lem:boundfejermono}, we have (as $k+1\geq n+m\geq n+l$ by definition)
\[
\norm{x_{n+l}-u}\leq\norm{x_n-u}+\frac{l}{k+1}+\frac{\sqrt{2b(1+M)}}{\sqrt{k+1}}\sum_{j=0}^{l-1}\sqrt{f(y_{n+j},x_{n+j})}\tag{$*$}
\]
and by using $f(y_n,x_n)\leq e$, we obtain
\begin{align*}
\norm{x_{n+l}-u}&\leq\norm{x_n-u}+\frac{l}{k+1}+\frac{\sqrt{2b(1+M)}}{\sqrt{k+1}}l\sqrt{e}\\
                &\leq\norm{x_n-u}+\frac{m}{k+1}+m\frac{\sqrt{2be(1+M)}}{\sqrt{k+1}}
\end{align*}
from ($*$) as $l\leq m$. As
\[
k\geq\floor*{(r+1)^2m^2\left(1+\sqrt{2be(1+M)}\right)^2},
\]
we obtain
\begin{align*}
\norm{x_{n+l}-u}&\leq\norm{x_n-u}+\frac{m}{(r+1)^2m^2\left(1+\sqrt{2be(1+M)}\right)^2}+m\frac{\sqrt{2be(1+M)}}{(r+1)m\left(1+\sqrt{2be(1+M)}\right)}\\
                &\leq\norm{x_n-u}+\frac{1}{r+1}\frac{1}{m\left(1+\sqrt{2be(1+M)}\right)}\left(m+m\sqrt{2be(1+M)}\right)\\
                &\leq\norm{x_n-u}+\frac{1}{r+1}
\end{align*}
from this.
\end{proof}
Applying Theorem 5.1 from \cite{KLN2018}, we now obtain a 
rate of metastability for $(x_n)_{n\in\mathbb{N}}$.
\begin{theorem}[Quantitative version of Theorem \ref{thm:main}, part (c), I]\label{thm:cquant}
Let $e\geq f(y_n,x_n)$ and $M>0$ with $M\geq\norm{\xi_n}$ for all $n\in\mathbb{N}$. Also, let $\lambda_n\in [a,b]\subseteq (0,2/M^2)$ for all $n\in\mathbb{N}$ as well as $L\geq\mathrm{diam}\{x_n\mid n\in\mathbb{N}\}$. Further, let $\varepsilon_n\geq 0$, $\varepsilon_n\to 0$ $(n\to\infty)$ and $\tau$ be a nondecreasing 
rate of convergence for $\varepsilon_n\to 0$ ($n\to\infty$). Let $u\in\Omega\neq\emptyset$ with $c_u\geq\norm{x_0-u}^2$.\\

Then, for all $k\in\mathbb{N}$ and all $g\in\mathbb{N}^\mathbb{N}$:
\[
\exists n\leq\Sigma(k,g)\,\forall i,j\in[n;n+g(n)]\left(\norm{x_i-x_j}\leq\frac{1}{k+1}\right)
\]
for $\Sigma(k,g):=\Sigma_0(P(k),k,g,\chi,\Phi)$ with 
\[
P(k):=\ceil*{(8k+8)\sqrt{N}L}^N
\]
and
\[
\begin{cases}\Sigma_0(0,k,g,\chi,\Phi):=0,\\
\Sigma_0(n+1,k,g,\chi,\Phi):=\Phi(\chi_g^{max}(\Sigma_0(n,k,g,\chi,\Phi),4k+3)),
\end{cases}
\]
where we define
\begin{gather*}
\chi(n,m,r):=\max\left\{n+m,\floor*{(r+1)^2m^2\beta(b,e,M)}\right\},\\
\chi_g(n,k):=\chi(n,g(n),k),\quad\chi_g^{max}(n,k):=\max\{\chi_g(i,k)\mid i\leq n\},\\
\Phi(k):=2\ceil*{c_u\cdot(\sigma(a,b,M,L))^416(k+1)^4}+\max\{k,\tau(2k+1)\}+1,
\end{gather*}
as well as
\[
\begin{cases}
\alpha(a,b,M):=-a(M^2b-2),\\
\beta(b,e,M):=\left(1+\sqrt{2be(1+M)}\right)^2,\\
\sigma(a,b,M,L):=\ceil*{\frac{\sqrt{2MbL}}{\sqrt[4]{\alpha(a,b,M)}}+\frac{Mb+1}{\sqrt{\alpha(a,b,M)}}+1}.
\end{cases}
\]
\end{theorem}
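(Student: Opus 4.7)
The plan is to apply Theorem 5.1 of \cite{KLN2018}, which produces a rate of metastability for uniformly Fej\'er monotone sequences in a totally bounded subset of a metric space from three quantitative ingredients: a modulus of uniform Fej\'er monotonicity, an approximate $F$-point bound, and a modulus of total boundedness. In our setting $F=\Omega$ and $AF_k=\Omega'_k$, and the three ingredients are essentially already in hand from the preceding lemmas.

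First I would assemble the inputs. The modulus $\chi(n,m,r)$ of uniform Fej\'er monotonicity is taken directly from Lemma \ref{lem:fejermonoquant} (which requires the uniform bound $e\geq f(y_n,x_n)$ and $\lambda_n\in[a,b]\subseteq (0,2/M^2)$, both assumed here). The approximate $\Omega$-point bound $\Phi(k)$ is taken from Lemma \ref{lem:appomegapointbound}, which uses the hypotheses on $c_u$, $M$, $L$, the rate $\tau$ and the parameters $a,b$. For the modulus of total boundedness I would observe that Lemma \ref{lem:normsqdec} forces the sequence to lie in the closed ball $\overline{B_{\sqrt{c_u}}(u)}\subseteq\mathbb{R}^N$, whose diameter is at most $L$. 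Enclosing this ball in a cube of side $L$ and subdividing it into $\lceil (8k+8)\sqrt{N}L\rceil^N$ congruent subcubes of diameter $\sqrt{N}L/\lceil (8k+8)\sqrt{N}L\rceil\leq 1/(8k+8)$ produces an $1/(8k+8)$-net, and hence the claimed modulus of total boundedness $P(k):=\lceil (8k+8)\sqrt{N}L\rceil^N$ in the precise sense of \cite{KLN2018}.

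Next I would feed $\chi$, $\Phi$ and $P$ into the abstract metastability theorem. The recursive shape
\[
\Sigma_0(n+1,k,g,\chi,\Phi)=\Phi\bigl(\chi_g^{\max}(\Sigma_0(n,k,g,\chi,\Phi),4k+3)\bigr)
\]
with the $P(k)$-fold iteration is precisely what comes out of the pigeonhole argument in the proof of Theorem 5.1 in \cite{KLN2018}: each round first produces an approximate $\Omega$-point via $\Phi$ and then propagates its proximity over the window $[n;n+g(n)]$ via $\chi_g^{\max}$, the $4k+3$ arising because the target $1/(k+1)$ is split into contributions from the net, the Fej\'er slack and the triangle inequality. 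After $P(k)$ rounds the total boundedness forces two approximate $\Omega$-points at iterates $\Sigma_0(i,\cdot)$, $\Sigma_0(j,\cdot)$ to land in a common member of the $1/(8k+8)$-net, and the Fej\'er bound propagated from the later of the two then yields $\|x_i-x_j\|\leq 1/(k+1)$ on the window of the required metastable $n$.

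The main obstacle is the bookkeeping: verifying that the constants $4k+3$ and $P(k)$ are exactly those dictated by Theorem 5.1 of \cite{KLN2018} when instantiated with our $\chi$ and $\Phi$, and in particular that $\Phi$ is monotone in $k$ (so that it may be legitimately post-composed with $\chi_g^{\max}$), which follows inspecting Lemma \ref{lem:appomegapointbound}. Once monotonicity is checked and the role of each ingredient is matched to the corresponding slot of Theorem 5.1, the conclusion is a direct instantiation; no further analytic work on the algorithm itself is needed at this stage.
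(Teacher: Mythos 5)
Your proposal is correct and follows essentially the same route as the paper: instantiate Theorem 5.1 of \cite{KLN2018} with the Fej\'er modulus $\chi$ from Lemma \ref{lem:fejermonoquant}, the approximate $\Omega$-point bound $\Phi$ from Lemma \ref{lem:appomegapointbound}, and a cube-subdivision modulus of total boundedness yielding $P(k)$. The only (inessential) discrepancy is your choice of ambient compact set: the paper works in $\overline{B_L(x_0)}$ (via Example 2.8 of \cite{KLN2018} and translation invariance), whereas your ball $\overline{B_{\sqrt{c_u}}(u)}$ need not have diameter $\leq L$ under the stated hypotheses --- you should instead enclose the set $\{x_n\mid n\in\mathbb{N}\}$ itself (which does have diameter $\leq L$) in a cube of side $L$, after which your counting goes through.
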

\begin{proof}
The proof is an application of Theorem 5.1 from \cite{KLN2018} with 
$X:=\overline{B_L(x_0)}$ and $F:=\Omega\cap X, \,AF_k:=\Omega'_k\cap X$ 
(and $G:=H:=Id$). 
Here we use that we have
\[
(x_n)_{n\in\mathbb{N}}\subseteq\overline{B_L(x_0)}
\]
as we assume $L\geq\mathrm{diam}\{x_n\mid n\in\mathbb{N}\}$ and we have $\norm{x_n-x_0}\leq\mathrm{diam}\{x_n\mid n\in\mathbb{N}\}$ by definition of the 
diameter. By Example 2.8 of \cite{KLN2018}, the function $\gamma(k):=\ceil*{2(k+1)\sqrt{N}L}^N$ is a modulus of total boundedness of $\overline{B_L(0)}$ and, considering the definition of (II)-moduli of total boundedness from \cite{KLN2018}, it is straightforward to see that these moduli are `translation-invariant' in the case of normed vector spaces, i.e. any (II)-modulus of total boundedness for a set $A\subseteq\mathbb{R}^N$ is also a (II)-modulus of total boundedness for $A+v:=\{a+v\mid a\in A\}$ with $v\in\mathbb{R}^N$. In our situation, we thus have that the particular $\gamma$ is also a (II-)modulus of total boundedness for $\overline{B_L(x_0)}=\overline{B_L(0)}+x_0$.
By Lemma \ref{lem:appomegapointbound}, $\Phi$ is an approximate $F$-point bound and by Lemma \ref{lem:fejermonoquant}, $\chi$ is a modulus for $(x_n)_{n\in\mathbb{N}}$ being 
uniformly Fej\'er monotone w.r.t. $F$ (and $AF_k$). 
Applying Theorem 5.1 with $\gamma,\Phi,\chi$ gives the result.
\end{proof}
\begin{remark}
In the above theorem, we can obtain a bound $e$ on $f(y_n,x_n)$ in terms of $c_u,a,b$ and $M$ by setting 
\[
e:=\sqrt{\frac{c_u}{-a(M^2b-2)}}
\]
as we have (using Lemma \ref{lem:boundfsqbynormsq}):
\begin{align*}
f(y_n,x_n)&\leq\frac{1}{\sqrt{-a(M^2b-2)}}\sqrt{\norm{x_n-u}^2-\norm{x_{n+1}-u}^2}\\
&\leq\frac{1}{\sqrt{-a(M^2b-2)}}\norm{x_n-u}\\
&\leq\frac{\sqrt{c_u}}{\sqrt{-a(M^2b-2)}}.
\end{align*}
\end{remark}
\begin{remark} The complexity of our rate of metastability is mainly given 
by the fact that the function $\Phi\circ \chi_g$ and so, in particular, 
the `counterfunction' $g$ gets iterated in the definition of $\Sigma_0.$
Some iteration of this sort, however, is unavoidable as the counterexample 
given in \cite{Pischke} to the computability of the rate of convergence 
(already for $N=1$ and $f=0$) shows that an extremely special case of 
the algorithm studied computes the limit of a decreasing sequence 
in $[0,1]$ whose rate of metastability necessarily needs this iteration 
process (see the discussion on p.4 of \cite{KLN2018}). In the next section 
we show that a low-complexity rate of full convergence results under an 
additional metric regularity assumption.
\end{remark}
 
\section{Adding further assumptions}\label{sec:addass}
In this section, we investigate two sets of assumptions to strengthen Theorem \ref{thm:cquant}.
\subsection{Uniform closedness}
In \cite{KLN2018}, the authors introduce the notion of \emph{uniform closedness}, an additional assumption on the way the sets $AF_k$ approach the set $F$ (using the notation of the previous general setting of Definition \ref{def:appfixedpoints}).

We recall the corresponding definition.
\begin{definition}
Let $(X,d)$ be a metric space and $F\subseteq X$ be nonempty. Let $AF_k\subseteq X$ be closed with $AF_k\supseteq AF_{k+1}$ and $F=\bigcap_{k\in\mathbb{N}}AF_k$. $F$ is called \emph{uniformly closed} for $(AF_k)_{k\in\mathbb{N}}$ with moduli $\delta_F,\omega_F:\mathbb{N}\to\mathbb{N}$ if
\[
\forall k\in\mathbb{N}\,\forall p,q\in X\left(q\in AF_{\delta_F(k)}\text{ and }d(p,q)\leq\frac{1}{\omega_F(k)+1}\Rightarrow p\in AF_k\right).
\]
\end{definition}
Under the assumption of uniform closedness, the authors obtain Theorem 5.3 in \cite{KLN2018} as a strengthening of Theorem 5.1. In the following, we will observe that, under further quantitative assumptions on the equilibrium function $f$, $\Omega$ is uniformly closed with the previously defined approximations $\left(\Omega'_k\right)_{k\in\mathbb{N}}$ and compute the corresponding moduli of uniform closedness.

\begin{definition}
Let $g:D\subseteq \mathbb{R}^N\to\mathbb{R}$ be uniformly continuous. A \emph{modulus of uniform continuity for $g$ on $D$} is a function $\sigma:\mathbb{N}\to\mathbb{N}$ such that
\[
\forall k\in\mathbb{N}\,\forall x,x'\in D\left(\norm{x-x'}\leq\frac{1}{\sigma(k)+1}\Rightarrow\abs{g(x)-g(x')}\leq\frac{1}{k+1}\right).
\]
\end{definition}
This notion of modulus of uniform continuity differs from the commonly known \emph{modulus of continuity} in (numerical) analysis but is commonly used in computable and constructive analysis as well as in 
proof mining (see e.g. \cite{Bis1967,Koh2008,Wei2000}). 

We then obtain the following result giving corresponding moduli of uniform closedness in terms of moduli of uniform continuity.
\begin{lemma}\label{lem:modunifclsd}
Let $(y_j)_{j\in\mathbb{N}}$ be a sequence in $\mathbb{R}^N$ (defining $\Omega'_k$) and let $\sigma_j$ be moduli of uniform continuity for $f(y_j,\cdot)$ for all $j\in\mathbb{N}$ on some subset $D\subseteq\mathbb{R}^N$. Then
\[
\forall k\in\mathbb{N}\,\forall u,u'\in D\left( u'\in\Omega'_{\delta_\Omega(k)}\text{ and }\norm{u-u'}\leq\frac{1}{\omega_\Omega(k)+1}\Rightarrow u\in\Omega'_k\right)
\]
where
\[
\begin{cases}
\delta_\Omega(k):=2k+1,\\
\sigma^{max}_j(k):=\max\{\sigma_i(k)\mid i\leq j\},\\
\omega_\Omega(k):=\max\{4k+3,\sigma^{max}_{k}(2k+1)\}.
\end{cases}
\]
\end{lemma}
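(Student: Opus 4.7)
The plan is to verify the two defining conditions of membership in $\Omega'_k$ separately. Fix $k\in\mathbb{N}$ and let $u,u'\in D$ satisfy $u'\in\Omega'_{\delta_\Omega(k)}=\Omega'_{2k+1}$ together with $\|u-u'\|\leq 1/(\omega_\Omega(k)+1)$. The two conditions I need are $\|u-Tu\|\leq 1/(k+1)$ and $f(y_j,u)\leq 1/(k+1)$ for every $j\leq k$.

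For the approximate fixed-point condition, I would combine the triangle inequality with the fact that a firmly nonexpansive $T$ is (in particular) nonexpansive, so $\|Tu-Tu'\|\leq \|u-u'\|$. This gives
\[
\|u-Tu\|\leq \|u-u'\|+\|u'-Tu'\|+\|Tu'-Tu\|\leq 2\|u-u'\|+\|u'-Tu'\|.
\]
Since $u'\in\Omega'_{2k+1}$ provides $\|u'-Tu'\|\leq 1/(2k+2)$, and since the clause $\omega_\Omega(k)\geq 4k+3$ in the definition of $\omega_\Omega$ provides $\|u-u'\|\leq 1/(4k+4)$, the right-hand side is bounded by $1/(2(k+1))+1/(2(k+1))=1/(k+1)$, as required.

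For the subgradient-type condition, I would invoke the uniform continuity moduli $\sigma_j$ of $f(y_j,\cdot)$. For any $j\leq k$, the other clause $\omega_\Omega(k)\geq\sigma^{max}_k(2k+1)\geq\sigma_j(2k+1)$ ensures $\|u-u'\|\leq 1/(\sigma_j(2k+1)+1)$, hence $|f(y_j,u)-f(y_j,u')|\leq 1/(2k+2)$. Since $j\leq k\leq 2k+1=\delta_\Omega(k)$, the assumption $u'\in\Omega'_{\delta_\Omega(k)}$ gives $f(y_j,u')\leq 1/(2k+2)$, so $f(y_j,u)\leq 1/(2(k+1))+1/(2(k+1))=1/(k+1)$.

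There is no substantive obstacle here: the argument is an elementary $1/(k+1)$-budget split in which one half is absorbed by the looser index $\delta_\Omega(k)=2k+1$ of $u'$ and the other half is absorbed by the perturbation $\|u-u'\|$ via nonexpansivity of $T$ and the moduli $\sigma_j$. The only point that requires some care is that the bound on $\|u-u'\|$ must serve both conditions simultaneously and must work uniformly for all $j\leq k$; this is precisely what the maximum $\omega_\Omega(k)=\max\{4k+3,\sigma^{max}_k(2k+1)\}$ is designed to guarantee.
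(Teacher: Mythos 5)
Your proof is correct and follows essentially the same route as the paper's: the same triangle-inequality decomposition $\|u-Tu\|\leq 2\|u-u'\|+\|u'-Tu'\|$ using nonexpansivity of $T$, and the same use of the uniform continuity moduli $\sigma_j$ to bound $|f(y_j,u)-f(y_j,u')|$, with both halves absorbed by the factor-$2$ improvement from $\delta_\Omega(k)=2k+1$. There is nothing to add.
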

\begin{proof}
Let $k\in\mathbb{N}$ and let $u,u'\in D$ with $u'\in\Omega'_{2k+1}$ and 
\[
\norm{u-u'}\leq\frac{1}{\omega_\Omega(k)+1}.
\]
$u'\in\Omega'_{2k+1}$ is by definition equivalent to
\[
\norm{u'-Tu'}\leq\frac{1}{2(k+1)}\text{ and }\forall j\leq 2k+1\;\left(f(y_j,u')\leq\frac{1}{2(k+1)}\right).
\]
As $T$ is especially nonexpansive, we have $\norm{Tu-Tu'}\leq\norm{u-u'}$. As $\omega_\Omega(k)\geq 4k+3$, we have further
\[
\norm{u-u'}\leq\frac{1}{\omega_\Omega(k)+1}\leq\frac{1}{4(k+1)}
\]
and thus
\begin{align*}
\norm{u-Tu}&\leq\norm{u-u'}+\norm{u'-Tu'}+\norm{Tu-Tu'}\\
&\leq2\norm{u-u'}+\norm{u'-Tu'}\\
&\leq2\frac{1}{4(k+1)}+\frac{1}{2(k+1)} =\frac{1}{k+1}.
\end{align*}
As $\omega_\Omega(k)\geq\sigma^{max}_{k}(2k+1)\geq \sigma_i(2k+1)$ for all $i\leq k$ by assumption, we have
\[
\norm{u-u'}\leq\frac{1}{\omega_\Omega(k)+1}\leq\frac{1}{\sigma_i(2k+1)+1}
\]
for all $i\leq k$ and as $\sigma_i$ is a modulus of uniform continuity for $f(y_i,\cdot)$, we have
\begin{align*}
f(y_i,u)&\leq f(y_i,u')+\abs{f(y_i,u)-f(y_i,u')}\\
&\leq\frac{1}{2(k+1)}+\frac{1}{2(k+1)} =\frac{1}{k+1}
\end{align*}
for all $i\leq k$. Thus, by definition we have $u\in\Omega_k'$.
\end{proof}
Using these moduli, we obtain the following strengthening of Theorem \ref{thm:cquant} in correspondence to Theorem 5.3 instead of Theorem 5.1 (of \cite{KLN2018}).
\begin{theorem}[Quantitative version of Theorem \ref{thm:main}, part (c), II]\label{thm:cquant2}
In addition to the assumptions of Theorem \ref{thm:cquant}, let $\sigma_j$ be a modulus of uniform continuity for $f(y_j,\cdot)$ for any $j\in\mathbb{N}$ on $\overline{B_L(x_0)}$.\\

Then, for all $k\in\mathbb{N}$ and all $g\in\mathbb{N}^\mathbb{N}$:
\[
\exists n\leq\tilde{\Sigma}(k,g)\,\forall i,j\in[n;n+g(n)]\left(\norm{x_i-x_j}\leq\frac{1}{k+1}\text{ and }x_i\in\Omega'_k\right)
\]
for $\tilde{\Sigma}(k,g):=\Sigma_0(P(k_0),k_0,g,\chi_k,\Phi)$ with $P,\chi,\Phi$ and $\Sigma_0$ as in Theorem \ref{thm:cquant} as well as
\[
k_0:=\max\left\{k,\ceil*{\frac{\omega_\Omega(k)-1}{2}}\right\}\text{ and }\chi_k(n,m,r):=\max\{\delta_{\Omega}(k),\chi(n,m,r)\},
\]
where we define
\[
\begin{cases}
\delta_\Omega(k):=2k+1,\\
\sigma^{max}_j(k):=\max\{\sigma_i(k)\mid i\leq j\},\\
\omega_\Omega(k):=\max\{4k+3,\sigma^{max}_{k}(2k+1)\}.
\end{cases}
\]
\end{theorem}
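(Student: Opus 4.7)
The plan is to apply Theorem 5.3 of \cite{KLN2018} (the strengthening of Theorem 5.1 used in the proof of Theorem \ref{thm:cquant}) to the same underlying data. I will take $X := \overline{B_L(x_0)}$, $F := \Omega \cap X$, $AF_k := \Omega'_k \cap X$, and reuse from the proof of Theorem \ref{thm:cquant} the modulus of total boundedness $\gamma(k) := \lceil 2(k+1)\sqrt{N}L\rceil^N$ of $\overline{B_L(x_0)}$, the approximate $F$-point bound $\Phi$ furnished by Lemma \ref{lem:appomegapointbound}, and the modulus of uniform Fej\'er monotonicity $\chi$ furnished by Lemma \ref{lem:fejermonoquant}. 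What is new is that I additionally invoke Lemma \ref{lem:modunifclsd} to obtain uniform closedness of $\Omega$ with respect to $(\Omega'_k)_{k\in\mathbb{N}}$, with moduli $\delta_\Omega(k) = 2k+1$ and $\omega_\Omega(k) = \max\{4k+3,\sigma^{max}_k(2k+1)\}$; this is precisely where the new hypothesis that each $f(y_j,\cdot)$ has a modulus of uniform continuity $\sigma_j$ on $\overline{B_L(x_0)}$ enters.

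The second step is to understand how Theorem 5.3 of \cite{KLN2018} combines these data. Its conclusion differs from that of Theorem 5.1 by asserting in addition that each $x_i$ in the metastable window already lies in $AF_k = \Omega'_k$, provided the inputs are sharpened to account for $\delta_\Omega$ and $\omega_\Omega$. Concretely, I expect that one must (i) enforce the Cauchy condition at a level fine enough that $\|x_i - x_j\| \leq 1/(\omega_\Omega(k)+1)$, which dictates the substitution $k \mapsto k_0 := \max\{k,\lceil(\omega_\Omega(k)-1)/2\rceil\}$; and (ii) guarantee via uniform Fej\'er monotonicity that the approximate $F$-point actually lies in $\Omega'_{\delta_\Omega(k)}$ rather than merely in $\Omega'_{k_0}$, which is arranged by replacing $\chi$ with $\chi_k(n,m,r) := \max\{\delta_\Omega(k),\chi(n,m,r)\}$. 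Combining (i) and (ii) via the uniform closedness modulus then upgrades an approximate $F$-point $x_n \in \Omega'_{\delta_\Omega(k)}$ to $x_i \in \Omega'_k$ for every $i$ in the resulting metastability interval, by the triangle inequality together with the nonexpansivity of $T$ already exploited in Lemma \ref{lem:modunifclsd}.

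Plugging $\gamma$, $\Phi$, $\chi_k$ and the inflated level $k_0$ into the recursion $\Sigma_0$ then yields the claimed bound $\tilde{\Sigma}(k,g) = \Sigma_0(P(k_0),k_0,g,\chi_k,\Phi)$ with $P(k_0) = \lceil(8k_0+8)\sqrt{N}L\rceil^N$, and the remaining constants $\alpha,\beta,\sigma$ are inherited unchanged from Theorem \ref{thm:cquant}. The main obstacle I anticipate is the bookkeeping around where $\delta_\Omega$ and $\omega_\Omega$ must enter: the level $k_0$ has to be chosen so that a Cauchy conclusion in the underlying rate of metastability interacts correctly with $\omega_\Omega(k)$, and simultaneously $\chi_k$ has to force the approximate point into $\Omega'_{\delta_\Omega(k)}$ without changing the meaning of the modulus of uniform Fej\'er monotonicity at any other argument. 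Once these substitutions are pinned down so that the hypotheses of Theorem 5.3 of \cite{KLN2018} are met verbatim, the theorem follows as a direct specialization.
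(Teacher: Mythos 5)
Your proposal is correct and follows exactly the paper's route: the paper likewise proves this by applying Theorem 5.3 of \cite{KLN2018} with the same $X$, $F$, $AF_k$, modulus of total boundedness, approximate $F$-point bound $\Phi$ and Fej\'er modulus $\chi$ as in Theorem \ref{thm:cquant}, with the only new ingredient being the uniform closedness moduli $\delta_\Omega,\omega_\Omega$ from Lemma \ref{lem:modunifclsd} applied with $D:=\overline{B_L(x_0)}$. Your additional bookkeeping about how $k_0$ and $\chi_k$ arise from the form of Theorem 5.3 is consistent with the stated bound and merely spells out what the paper leaves implicit.
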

\begin{proof}
Apply Theorem 5.3 of \cite{KLN2018} under the same considerations 
as in the proof of Theorem \ref{thm:cquant}, using Lemma \ref{lem:modunifclsd} with $D:=\overline{B_L(x_0)}.$ The Lemmas \ref{lem:appomegapointbound} and \ref{lem:fejermonoquant} apply as before.
\end{proof}
\begin{remark}\label{rem:OmegaimpEP}
This theorem is a \emph{finitization} of Theorem \ref{thm:main}, part (c) as it (ineffectively, but elementary) implies back the statement of (c): the metastability trivially implies the Cauchy-statement (and thus convergence) of $(x_n)_{n\in\mathbb{N}}$. Further: for $M\in\mathbb{N}$ and $g:n\mapsto M$, Theorem \ref{thm:cquant2} gives $\exists i\geq M\left(x_i\in\Omega_k'\right)$. Thus, as $\Omega_k'$ is closed, we have $x:=\lim_{n\to\infty}x_n\in\Omega_k'$ and as $k$ was arbitrary, we have $x\in\Omega$ by $\Omega=\bigcap_{k\in\mathbb{N}}\Omega'_k$. As in \cite{IY2009}, p. 258, it follows elementary that 
$x\in\mathrm{EP}(\mathrm{Fix}(T),f).$
\end{remark}

\subsection{Regularity conditions}
Using the recent quantitative treatment \cite{KLAN2019} of very general scenarios of regularity conditions in the context of Fej\'er monotone sequences, we can give an improvement of Theorems \ref{thm:cquant} and \ref{thm:cquant2} by adding assumptions on (a quantitative version of) a regularity condition for $\Omega$ and obtain (under this assumption) even rates of convergence for the sequence approximating an equilibrium point.\\

Central for the further results is the following quantitative version of regularity, defined as \emph{modulus of regularity} in \cite{KLAN2019}. For this, given a function $F:\mathbb{R}^N\to\mathbb{R}$, we write $\mathrm{zer}\,F$ for the set of zeros of $F$. 
\begin{definition}[\cite{KLAN2019}]
Let $F:\mathbb{R}^N\to\mathbb{R}$ be a function with $\mathrm{zer}\,F\neq\emptyset$ and fix $z\in\mathrm{zer}\,F$ and $r>0$. A function $\phi:(0,\infty)\to(0,\infty)$ is a \emph{modulus of regularity} for $F$ w.r.t. $\mathrm{zer}\,F$ and $\overline{B_r(z)}$ if for all $\varepsilon>0$ and all $x\in\overline{B_r(z)}$:
\[
\vert F(x)\vert<\phi(\varepsilon)\Rightarrow\mathrm{dist}(x,\mathrm{zer}\,F)<\varepsilon.
\]
\end{definition}
The setting for these regularity conditions in \cite{KLAN2019} is far more general, e.g. being in the context of abstract metric spaces. We will, however, only need the above version for functions over $\mathbb{R}^N$.\\

Under the assumption of a modulus of regularity for $F$ together with the Fej\'er monotonicity of a sequence $(x_n)_{n\in\mathbb{N}}$ w.r.t. $\mathrm{zer}\,F$ and some further assumptions on quantitative information on how the sequence $(x_n)_{n\in\mathbb{N}}$ interacts with $F$, the authors obtain effective rates of convergence for the sequence $(x_n)_{n\in\mathbb{N}}$.

By quantitative information on the interaction of $(x_n)_{n\in\mathbb{N}}$ with $F$, we mean precisely that $(x_n)_{n\in\mathbb{N}}$ has \emph{approximate $F$ zeros}. For this, recall 
the following (modification of the) definition from \cite{KLAN2019}.
\begin{definition}
Let $F$ be as above. We say that a sequence $(x_n)_{n\in\mathbb{N}}$ has \emph{approximate $F$ zeros} if
\[
\forall k\in\mathbb{N}\,\exists n\in\mathbb{N}\left( |F(x_n)|<\frac{1}{k+1}\right).
\]
A bound on `$\exists n\in\mathbb{N}$' is called an \emph{approximate zero bound}.
\end{definition}
Notice the similarity of approximate zeros with approximate $F$-points from Definition \ref{def:appfixedpoints} (although there $F$ has a different meaning). Guided by this similarity, the fact that the particular sequence $(x_n)_{n\in\mathbb{N}}$ from Algorithm \ref{algorithm} has approximate $\Omega$-points (relative to the representation $(\Omega'_k)_{k\in\mathbb{N}}$) and the fact that $(x_n)_{n\in\mathbb{N}}$ is Fej\'er monotone w.r.t. $\Omega$, we are particularly interested in a function $F$ where $(1) \ \mathrm{zer}\,F=\Omega$ and $
(2) \ \abs{F(x)}<1/(k+1)$ relates to $x \in \Omega'_k.$ 
Towards a particular choice, we first define the set-valued function $\gamma:\mathbb{R}^N\to\mathcal{P}(\mathbb{N})$ through
\[
\gamma(x):=\left\{k\in\mathbb{N}\mid f(y_j,x)>\frac{1}{k+1}\text{ for some }j\leq k\right\}
\]
and the corresponding function $G:\mathbb{R}^N\to\mathbb{R}_{\ge 0}$ defined through
\[
G(x):=\begin{cases}0&\text{if }\gamma(x)=\emptyset,\\2&\text{if }\gamma(x)=\mathbb{N},\\\frac{1}{\inf\gamma(x)}&\text{otherwise}.\end{cases}
\]
Given a mapping $T:\mathbb{R}^N\to\mathbb{R}^N$, we may further define the function $F:\mathbb{R}^N\to\mathbb{R}_{\ge 0}$
\[
F(x):=\max\{\norm{x-Tx},G(x)\}.
\]
This function is now an adequate choice which fulfills the previously desired requirements (1) and (2). For this, note first that $\gamma(x)$ has the following property whose proof is immediate:
\begin{lemma}\label{lem:propofgamma}
For any $x\in\mathbb{R}^N$ and any $k\in\mathbb{N}$: $k\in\gamma(x)$ implies $j\in\gamma(x)$ for all $j\geq k$.
\end{lemma}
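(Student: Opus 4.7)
The statement is essentially a monotonicity fact built into the definition of $\gamma(x)$, so my proof plan is to unpack the definition and use two elementary observations: first, that the existential witness ``$j\leq k$'' persists when $k$ is enlarged, and second, that the threshold $\tfrac{1}{k+1}$ only gets smaller as $k$ grows. Together these immediately give membership in $\gamma(x)$ for all larger indices.

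Concretely, I would argue as follows. Fix $x\in\mathbb{R}^N$ and suppose $k\in\gamma(x)$. By definition of $\gamma$, there exists some $j_0\leq k$ with $f(y_{j_0},x) > \tfrac{1}{k+1}$. Now take any $j\geq k$. Then $j_0\leq k\leq j$, so $j_0$ is still an eligible index in the defining existential for $\gamma$ at level $j$. Moreover, since $j\geq k$ we have $\tfrac{1}{j+1}\leq\tfrac{1}{k+1}$, hence
\[
f(y_{j_0},x) \;>\; \frac{1}{k+1} \;\geq\; \frac{1}{j+1}.
\]
This is exactly the condition for $j\in\gamma(x)$.

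The proof is a one-liner once the definition is unpacked, and there is no real obstacle: no analytic facts about $f$, about $T$, or about the iterates $(x_n)_{n\in\mathbb{N}}$ are needed. The only point to note is that the choice of witness $j_0$ does not depend on $j$, so a single $j_0$ works uniformly for all $j\geq k$, which is what makes the implication in the lemma hold. I would keep the write-up short and simply display the chain of inequalities above, as the authors already signal by calling the proof ``immediate.''
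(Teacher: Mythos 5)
Your proof is correct and is exactly the ``immediate'' argument the paper has in mind (the paper omits the proof entirely): the witness $j_0\leq k\leq j$ persists and the threshold $\tfrac{1}{j+1}\leq\tfrac{1}{k+1}$ only shrinks, so membership propagates upward. Nothing to add.
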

In the following, we verify properties (1) and (2) for $F$:
\begin{lemma}\label{lem:propofF}
For all $x\in\mathbb{R}^N$ and all $k\in\mathbb{N}$: $x\in\Omega'_k$ iff $F(x)\leq\frac{1}{k+1}$. In particular, $\Omega =\mathrm{zer}\,F$.
\end{lemma}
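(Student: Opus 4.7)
The plan is to prove the biconditional by unpacking $F(x) \leq \frac{1}{k+1}$ into its two constituent inequalities $\|x-Tx\| \leq \frac{1}{k+1}$ and $G(x) \leq \frac{1}{k+1}$, matching them against the two defining clauses of $\Omega'_k$. The first inequality matches the fixed-point condition verbatim, so the whole content of the lemma lies in showing that $G(x) \leq \frac{1}{k+1}$ is equivalent to $f(y_j,x) \leq \frac{1}{k+1}$ for all $j \leq k$.

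For that equivalence I would do a case split on the value of $\gamma(x)$, using the already-established monotonicity from Lemma~\ref{lem:propofgamma} that $\gamma(x)$ is an upward-closed subset of $\mathbb{N}$. If $\gamma(x) = \emptyset$, then both sides hold trivially: $G(x) = 0$, and by definition of $\gamma$ no $k' \in \mathbb{N}$ witnesses a violation, so in particular $k$ does not, giving $f(y_j,x) \leq \frac{1}{k+1}$ for all $j \leq k$. If $\gamma(x) = \mathbb{N}$, then $G(x) = 2 > \frac{1}{k+1}$ and, since $k \in \gamma(x)$, some $j \leq k$ satisfies $f(y_j,x) > \frac{1}{k+1}$, so $x \notin \Omega'_k$; both sides fail. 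In the remaining case, Lemma~\ref{lem:propofgamma} gives $\gamma(x) = \{m \in \mathbb{N} : m \geq k_0\}$ for $k_0 := \inf \gamma(x) \geq 1$, so $G(x) = \frac{1}{k_0}$, and $G(x) \leq \frac{1}{k+1} \Leftrightarrow k_0 \geq k+1 \Leftrightarrow k \notin \gamma(x) \Leftrightarrow f(y_j,x) \leq \frac{1}{k+1}$ for all $j \leq k$.

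For the last assertion $\Omega = \mathrm{zer}\,F$, I would use $\Omega = \bigcap_k \Omega'_k$ together with the just-proved equivalence: $x \in \Omega$ iff $F(x) \leq \frac{1}{k+1}$ for every $k$, which, because $F \geq 0$, is equivalent to $F(x) = 0$.

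The reasoning is entirely bookkeeping on the definitions; the only subtle point is the three-case split on $\gamma(x)$, where one must be careful that the boundary case $\inf \gamma(x) = k+1$ lines up correctly with the strict/non-strict inequalities in the definitions of $\gamma$ and $\Omega'_k$. I expect no substantive obstacle.
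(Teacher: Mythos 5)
Your proposal is correct and takes essentially the same approach as the paper: both reduce the claim to the behaviour of $G$ via the upward closure of $\gamma(x)$ from Lemma~\ref{lem:propofgamma} and the same case distinction on $\gamma(x)$ being empty, all of $\mathbb{N}$, or proper and nonempty, with the same comparison of $\inf\gamma(x)$ against $k+1$. The only difference is organizational: you isolate the equivalence of $G(x)\leq\frac{1}{k+1}$ with the $f$-conditions and settle it by a three-way case split, whereas the paper proves the forward implication directly and the converse by contraposition.
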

\begin{proof}
Let $x\in\mathbb{R}^N$ and let $k\in\mathbb{N}$ and suppose first that $x\in\Omega'_k$. Then, we have
\[
\norm{x-Tx}\leq\frac{1}{k+1}\text{ and }f(y_j,x)\leq\frac{1}{k+1}\text{ for all }j\leq k.
\]
The latter gives $k\not\in\gamma(x)$ and thus $j\not\in\gamma(x)$ for all $j\leq k$ by the contraposition of Lemma \ref{lem:propofgamma}. Now, either $\gamma(x)=\emptyset$ or $\gamma(x)\neq\emptyset$. The former gives $G(x)=0\leq\frac{1}{k+1}$ by definition, the latter gives
$\inf\gamma(x)\geq k+1$ 
and thus $G(x)=1/(\inf\gamma(x))\leq 1/(k+1)$. In any way $F(x)\leq1/(k+1)$.

Now, suppose $x\not\in\Omega'_k$. Then, $\norm{x-Tx}>\frac{1}{k+1}$ or $f(y_j,x)>\frac{1}{k+1}$ for some $j\leq k$. The former gives $F(x)>\frac{1}{k+1}$ immediately. The latter gives $k\in\gamma(x)$. Thus,
\[
\inf\gamma(x)\leq k<k+1
\]
and therefore, we have either $\gamma(x)=\mathbb{N}$ where $G(x)=2>\frac{1}{k+1}$ by definition, or $\inf\gamma(x)\geq 1$ where then
\[
G(x)=\frac{1}{\inf\gamma(x)}>\frac{1}{k+1}.
\]
In any way $F(x)>1/(k+1)$.
\end{proof}
Together with the previous results on approximate $\Omega$-points from Lemma \ref{lem:appomegapointbound}, we obtain the following result regarding approximate $F$ zeros.
\begin{lemma}\label{lem:appzerosbound}
Let $u\in\Omega\neq\emptyset$ with $c_u\geq\norm{x_0-u}^2$ and let $M>0$ with $M\geq\norm{\xi_n}$ for all $n\in\mathbb{N}$. Let $\lambda_n\in [a,b]\subseteq (0,2/M^2)$ for all $n\in\mathbb{N}$ as well as $L\geq\mathrm{diam}\{x_n\mid n\in\mathbb{N}\}$. Further, let $\varepsilon_n\geq 0$ for all $n\in\mathbb{N}$ and let $\varepsilon_n\to 0$ ($n\to\infty$) where $\tau$ is a nondecreasing \emph{rate of convergence} for $\varepsilon_n\to 0$ ($n\to\infty$). Let $\Phi$ be as in Lemma 
\ref{lem:appomegapointbound}
Then for any $k\in\mathbb{N}$
\[
\exists n\leq\Phi(k,a,b,M,L,\tau,c_u)\left(F(x_{n})\leq\frac{1}{k+1}\right).
\]
\end{lemma}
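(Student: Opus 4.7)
The plan is to combine the two preceding ingredients essentially verbatim: Lemma \ref{lem:appomegapointbound} gives a quantitative bound on how quickly $(x_n)_{n\in\mathbb{N}}$ enters the approximations $\Omega'_k$, while Lemma \ref{lem:propofF} translates membership in $\Omega'_k$ into a bound on $F$. There is nothing new to prove beyond gluing these together.

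Concretely, I would first invoke Lemma \ref{lem:appomegapointbound} with the same data $k, a, b, M, L, \tau, c_u$, which produces an index $n \leq \Phi(k,a,b,M,L,\tau,c_u)$ with $x_n \in \Omega'_k$. Then I would apply the easy direction of Lemma \ref{lem:propofF} to this $n$: since $x_n \in \Omega'_k$, we have $F(x_n) \leq \tfrac{1}{k+1}$. The same $n$ witnesses the existential statement, completing the proof.

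There is no real obstacle here; the statement is deliberately designed so that the approximate $\Omega$-point bound $\Phi$ of Lemma \ref{lem:appomegapointbound} transfers, without loss, into an approximate $F$-zero bound, because the auxiliary function $F$ was constructed in Lemma \ref{lem:propofF} precisely to satisfy $x \in \Omega'_k \iff F(x) \leq \tfrac{1}{k+1}$. The only thing to notice is the cosmetic point that one should quote the $\leq$ direction of Lemma \ref{lem:propofF} (rather than the strict inequality version), which is exactly what is written there.
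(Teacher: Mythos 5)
Your proposal matches the paper's proof exactly: invoke Lemma \ref{lem:appomegapointbound} to get $n\leq\Phi(k,a,b,M,L,\tau,c_u)$ with $x_n\in\Omega'_k$, then apply Lemma \ref{lem:propofF} to conclude $F(x_n)\leq\frac{1}{k+1}$. Nothing to add.
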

\begin{proof}
By Lemma \ref{lem:appomegapointbound}, we have
\[
\exists n\leq\Phi(k,a,b,M,L,\tau,c_u)\;\left(x_{n}\in\Omega'_k\right).
\]
By Lemma \ref{lem:propofF}, this implies $F(x_{n})\leq\frac{1}{k+1}$.
\end{proof}
As the function $F$ may be perceived to be quite artificial, it is of 
interest to see equivalent characterizations for the existence of a modulus of regularity for $F$. The following easy consequence of Lemma 
\ref{lem:propofF} gives a result in this vein.
\begin{lemma}\label{lem:modregchar}
Let $u\in\Omega$ and let $r>0$. Then:
\begin{enumerate}
\item If $\phi:(0,\infty)\to(0,\infty)$ is a modulus of regularity for $F$ w.r.t. $\Omega$ and $\overline{B_r(u)}$, then
\[
\forall\varepsilon>0\,\forall x\in\overline{B_r(u)}\left(x\in\Omega_{\ceil*{\frac{1}{\phi(\varepsilon)}}}'\Rightarrow\mathrm{dist}(x,\Omega)<\varepsilon\right).
\]
\item If $\psi:(0,\infty)\to\mathbb{N}$ is such that
\[
\forall\varepsilon>0\,\forall x\in\overline{B_r(u)}\left(x\in\Omega_{\psi(\varepsilon)}'\Rightarrow\mathrm{dist}(x,\Omega)<\varepsilon\right),
\]
then $1/(\psi(\varepsilon)+1)$ is a modulus of regularity for $F$ w.r.t. $\Omega$ and $\overline{B_r(u)}$.
\end{enumerate}
\end{lemma}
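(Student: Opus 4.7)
The plan is to reduce both parts to the characterization in Lemma \ref{lem:propofF}, which asserts that $x\in\Omega'_k$ is equivalent to $F(x)\leq 1/(k+1)$ for every $x\in\mathbb{R}^N$ and every $k\in\mathbb{N}$. Once this dictionary between the numerical bound on $F$ and membership in the approximating sets $\Omega'_k$ is in hand, each of the two implications is a matter of matching the real quantity $\phi(\varepsilon)$ (respectively $1/(\psi(\varepsilon)+1)$) appearing in the definition of modulus of regularity with the fraction $1/(k+1)$ produced by Lemma \ref{lem:propofF}. It is also worth noting upfront that $F$ is nonnegative by construction, so $|F(x)|=F(x)$ and the absolute value in the definition of modulus of regularity can be dropped without loss.

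For part (1), I would fix $\varepsilon>0$ and $x\in\overline{B_r(u)}$ with $x\in\Omega'_{\ceil*{1/\phi(\varepsilon)}}$. Setting $k:=\ceil*{1/\phi(\varepsilon)}$, Lemma \ref{lem:propofF} yields $F(x)\leq 1/(k+1)$. Since $k\geq 1/\phi(\varepsilon)$, we have $k+1>1/\phi(\varepsilon)$ and hence $1/(k+1)<\phi(\varepsilon)$, so $F(x)<\phi(\varepsilon)$. Invoking the hypothesis that $\phi$ is a modulus of regularity for $F$ w.r.t.\ $\Omega$ and $\overline{B_r(u)}$ then gives $\mathrm{dist}(x,\Omega)<\varepsilon$, as desired.

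For part (2), I would fix $\varepsilon>0$ and $x\in\overline{B_r(u)}$ with $F(x)<1/(\psi(\varepsilon)+1)$; in particular $F(x)\leq 1/(\psi(\varepsilon)+1)$, so Lemma \ref{lem:propofF} forces $x\in\Omega'_{\psi(\varepsilon)}$. The assumption on $\psi$ then directly delivers $\mathrm{dist}(x,\Omega)<\varepsilon$, which shows that $1/(\psi(\varepsilon)+1)$ is a modulus of regularity for $F$ w.r.t.\ $\Omega$ and $\overline{B_r(u)}$ (using once more that $|F(x)|=F(x)$). I do not anticipate any real obstacle: the entire content of the lemma is unpacking definitions and using Lemma \ref{lem:propofF} as a translation. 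The only minor subtlety is being careful with the passage from the real bound $\phi(\varepsilon)$ to the integer index $\ceil*{1/\phi(\varepsilon)}$ in part (1), where the strict inequality $1/(\ceil*{1/\phi(\varepsilon)}+1)<\phi(\varepsilon)$ is what makes the argument go through.
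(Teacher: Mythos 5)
Your proof is correct and follows exactly the route the paper intends: the lemma is stated there as an ``easy consequence of Lemma \ref{lem:propofF}'' with no written proof, and your argument — translating between $F(x)\leq 1/(k+1)$ and $x\in\Omega'_k$ via that lemma, together with the elementary estimate $1/(\ceil*{1/\phi(\varepsilon)}+1)<\phi(\varepsilon)$ and the identity $\Omega=\mathrm{zer}\,F$ — is precisely the intended unpacking of definitions.
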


Using this lemma, we obtain the following rate of convergence for the sequence $(x_n)_{n\in\mathbb{N}}$ generated by Algorithm \ref{algorithm} under the assumption of a modulus of regularity for $F$.
\begin{theorem}
Let $u\in\Omega\neq\emptyset$ with $c_u\geq\norm{x_0-u}^2$ and let $M>0$ with $M\geq\norm{\xi_n}$ for all $n\in\mathbb{N}$. Let $\lambda_n\in [a,b]\subseteq (0,2/M^2)$ for all $n\in\mathbb{N}$ as well as $L\geq\mathrm{diam}\{x_n\mid n\in\mathbb{N}\}$. Further, let $\varepsilon_n\geq 0$ for all $n\in\mathbb{N}$ and suppose $\varepsilon_n\to 0$ ($n\to\infty$) with a nondecreasing \emph{rate of convergence} $\tau$.\\

If $\psi:(0,\infty)\to\mathbb{N}$ is such that
\[
\forall\varepsilon>0\,\forall x\in\overline{B_{\sqrt{c_u}}(u)}\left(x\in\Omega_{\psi(\varepsilon)}'\Rightarrow\mathrm{dist}(x,\Omega)<\varepsilon\right),
\]
then $(x_n)_{n\in\mathbb{N}}$ is convergent with $x=\lim_{n\to\infty}x_n\in\mathrm{EP}(\mathrm{Fix}(T),f)$ and
\[
\forall k\in\mathbb{N}\,\forall n\geq\Phi\left(\phi\left(\frac{1}{2(k+1)}\right)\right)\left(\norm{x_n-x}<\frac{1}{k+1}\right)
\]
where we define
\[
\phi(\varepsilon):=\frac{1}{\psi(\varepsilon)+1}
\]
and
\[
\Phi(\varepsilon):=2\ceil*{c_u\cdot(\sigma(a,b,M,L))^416(\ceil*{\frac{1}{\varepsilon}}+1)^4}+\max\left\{\ceil*{\frac{1}{\varepsilon}},\tau\left(2\ceil*{\frac{1}{\varepsilon}}+1\right)\right\}+1,
\]
as well as 
\[
\alpha(a,b,M):=-a(M^2b-2),\
\sigma(a,b,M,L):=\ceil*{\frac{\sqrt{2MbL}}{\sqrt[4]{\alpha(a,b,M)}}+\frac{Mb+1}{\sqrt{\alpha(a,b,M)}}+1}.
\]
\end{theorem}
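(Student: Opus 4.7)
The plan is to combine the approximate $\Omega$-point bound of Lemma \ref{lem:appomegapointbound} with the regularity hypothesis on $\psi$: one approximate $\Omega$-point inside the ball $\overline{B_{\sqrt{c_u}}(u)}$ can be converted into genuine closeness to $\Omega$ (via $\psi$), after which Fej\'er monotonicity propagates this closeness to every later iterate. This is exactly the quantitative route used in \cite{KLAN2019}.

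First I would record two preliminaries: by Lemma \ref{lem:normsqdec}, $\norm{x_n - u} \leq \sqrt{c_u}$ for all $n$, so each $x_n$ lies in $\overline{B_{\sqrt{c_u}}(u)}$; and the real-valued $\Phi(\varepsilon)$ of the theorem coincides by inspection with the integer-indexed bound $\Phi_{\mathrm{Lem}}(\ceil*{1/\varepsilon},a,b,M,L,\tau,c_u)$ of Lemma \ref{lem:appomegapointbound}. Now fix $k$, set $\varepsilon_0 := 1/(2(k+1))$, and observe the key identity $\ceil*{1/\phi(\varepsilon_0)} = \psi(\varepsilon_0)+1$. Applying Lemma \ref{lem:appomegapointbound} at this integer index yields some $n_0 \leq \Phi(\phi(\varepsilon_0))$ with $x_{n_0} \in \Omega'_{\psi(\varepsilon_0)+1} \subseteq \Omega'_{\psi(\varepsilon_0)}$. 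Since $x_{n_0} \in \overline{B_{\sqrt{c_u}}(u)}$, the hypothesis on $\psi$ delivers $\mathrm{dist}(x_{n_0},\Omega) < \varepsilon_0$. Closedness of $\Omega$ in $\mathbb{R}^N$ (it is an intersection of the closed sets $\Omega_n$, using continuity of $T$ and of each $f(y_j,\cdot)$) then produces $q \in \Omega$ with $\norm{x_{n_0} - q} < \varepsilon_0$. Fej\'er monotonicity (Lemma \ref{lem:normsqdec}) with respect to $q$ forces $\norm{x_n - q} < \varepsilon_0$ for every $n \geq n_0$, so $\norm{x_n - x_m} < 2\varepsilon_0 = 1/(k+1)$ whenever $n,m \geq n_0$; hence $(x_n)$ is Cauchy and converges to some $x$. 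Letting $m \to \infty$ gives $\norm{x - q} \leq \varepsilon_0$, whence
\[ \norm{x_n - x} \leq \norm{x_n - q} + \norm{q - x} < 1/(k+1) \]
for all $n \geq \Phi(\phi(\varepsilon_0))$. Finally, the same argument applied at arbitrary $\varepsilon > 0$ yields $\mathrm{dist}(x,\Omega) \leq \varepsilon$, so $x \in \Omega$ by closedness, and Remark \ref{rem:OmegaimpEP} concludes $x \in \mathrm{EP}(\mathrm{Fix}(T),f)$.

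I do not anticipate a serious obstacle; the argument is essentially bookkeeping on top of the lemmas already in place. The two points where I would be careful are (a) the translation between the integer index of $\Omega'_k$ and the real argument of $\phi$, handled cleanly by the identity $\ceil*{1/\phi(\varepsilon)} = \psi(\varepsilon)+1$, and (b) preserving the final \emph{strict} inequality $\norm{x_n - x} < 1/(k+1)$, which survives because $\norm{x_n - q} < \varepsilon_0$ stays strict even though $\norm{q - x}$ degrades to $\leq \varepsilon_0$ in the limit.
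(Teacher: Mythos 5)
Your argument is correct, and every step checks out: the identity $\ceil*{1/\phi(\varepsilon)}=\psi(\varepsilon)+1$, the coincidence of the theorem's $\Phi(\varepsilon)$ with the bound $\Phi(\ceil*{1/\varepsilon},a,b,M,L,\tau,c_u)$ from Lemma \ref{lem:appomegapointbound}, the containment of all iterates in $\overline{B_{\sqrt{c_u}}(u)}$ via Lemma \ref{lem:normsqdec}, and the propagation of $\norm{x_n-q}<\varepsilon_0$ by Fej\'er monotonicity with respect to the nearest point $q\in\Omega$. The difference from the paper is one of packaging rather than substance: the paper obtains the theorem as a direct application of the abstract Theorem 4.1(i) of \cite{KLAN2019}, which requires first encoding membership in $\Omega'_k$ as smallness of the auxiliary function $F(x)=\max\{\norm{x-Tx},G(x)\}$ (Lemma \ref{lem:propofF}), converting Lemma \ref{lem:appomegapointbound} into an approximate-zero bound for $F$ (Lemma \ref{lem:appzerosbound}), and translating $\psi$ into a modulus of regularity for $F$ (Lemma \ref{lem:modregchar}). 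You bypass this scaffolding entirely and inline the proof of the abstract theorem, working directly with the sets $\Omega'_k$ and the hypothesis on $\psi$. Your route is shorter and self-contained, avoiding the somewhat artificial function $F$; the paper's route is deliberately chosen to exhibit the algorithm as an instance of the general modulus-of-regularity framework, which is the methodological point of the case study. Both yield the identical bound. The only cosmetic remark is that Remark \ref{rem:OmegaimpEP}, which you cite for the final step $x\in\mathrm{EP}(\mathrm{Fix}(T),f)$, is itself just a pointer to the elementary argument on p.~258 of \cite{IY2009}; the paper's proof cites that source directly, so nothing is lost.
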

\begin{proof}
The proof is an application of Theorem 4.1, (i) of \cite{KLAN2019}. Fej\'er monotonicity of $(x_n)_{n\in\mathbb{N}}$ w.r.t $\Omega$ is already contained in (a) of Theorem \ref{thm:main}. By Lemma \ref{lem:modregchar}, (2), $\phi$ is a modulus of regularity for $F$.

Now, let $\varepsilon>0$. Then, by Lemma \ref{lem:appzerosbound}, we obtain
\[
\exists n\leq\Phi(\varepsilon)\;\left( F(x_n)\leq\frac{1}{\ceil*{1/\varepsilon}+1} <\varepsilon\right).
\]
As $\varepsilon$ was arbitrary, Theorem 4.1 of \cite{KLAN2019} applies as $\Omega$ is closed and we obtain $x=\lim_{n\to\infty}x_n\in\Omega$ with the desired rate of convergence. We obtain $x\in\mathrm{EP}(\mathrm{Fix}(T),f)$  as in \cite{IY2009}, p. 258.
\end{proof}
\vspace*{-2mm}
\noindent
{\bf Acknowledgment:} The second author was supported by the German Science 
Foundation (DFG KO 1737/6-1).


\begin{thebibliography}{10}
\vspace*{-1mm}

\bibitem{BC2017}
H.~Bauschke and P.~Combettes.
\newblock {\em {Convex Analysis and Monotone Operator Theory in Hilbert
  Spaces}}.
\newblock CMS Books in Mathematics. Springer, 2nd Edition, Cham, 2017.

\bibitem{Bis1967}
E.~Bishop.
\newblock {\em {Foundations of Constructive Analysis}}.
\newblock McGraw-Hill, New York, 1967.

\bibitem{Bolteetal}
J.~Bolte, A.~ Daniilidis, O.~Ley and L.~ Mazet.
\newblock{Characterizations of {\L}ojasiewicz inequalities: subgradient 
flows, talweg, convexity.}
\newblock{\em Transactions of the American Mathematical Society}, 362(6):  
3319--3363, 2010.
 
\bibitem{Borwein17}
J.M.~Borwein, G.~Li, and M.K.~Tam. 
\newblock{Convergence rate analysis for averaged fixed point iterations in common fixed point problems,} 
\newblock{\em SIAM Journal of Optimization}, 27:1--33, 2017

\bibitem{IY2009}
H.~Iiduka and I.~Yamada.
\newblock {A subgradient-type method for the equilibrium problem over the fixed
  point set and its applications}.
\newblock {\em Optimization}, 58(2):251--261, 2009.

\bibitem{Ioffe2009}
A.D.~Ioffe. 
\newblock{An invitation to tame optimization}.
\newblock{\em Siam Journal of Optimization}, 19(4): 1894--1917, 2009.


\bibitem{IS2003b}
A.~N. Iusem and W.~Sosa.
\newblock {Iterative Algorithms for Equilibrium Problems}.
\newblock {\em Optimization}, 52:301--316, 2003.

\bibitem{Ko91} K.-I.~Ko. 
\newblock{\em {Complexity theory of real functions}}. 
\newblock Birkh\"auser, Boston-Basel-Berlin, x+309 pp., 1991. 

\bibitem{Koh2008}
U.~Kohlenbach.
\newblock {\em {Applied Proof Theory: Proof Interpretations and their Use in
  Mathematics}}.
\newblock Springer Monographs in Mathematics. Springer-Verlag Berlin
  Heidelberg, 2008.

\bibitem{Koh18}
U.~Kohlenbach.
\newblock {Proof-theoretic Methods in Nonlinear Analysis.} In: \newblock{Proc. ICM 2018, B. Sirakov, P. Ney de Souza, M. Viana (eds.)}, Vol. 2, pp. 61--82. World Scientific 2019. 

\bibitem{KLN2018}
U.~Kohlenbach, L.~Leu\c{s}tean, and A.~Nicolae.
\newblock {Quantitative Results on Fej\'er Monotone Sequences}.
\newblock {\em Communications in Contemporary Mathematics}, 20(2), 2018.

\bibitem{KLAN2017}
U.~Kohlenbach, G.~L\'opez-Acedo, and A.~Nicolae.
\newblock {Quantitative Asymptotic Regularity Results for the Composition of
  Two Mappings}.
\newblock {\em Optimization}, 66:1291--1299, 2017.

\bibitem{KLAN2019}
U.~Kohlenbach, G.~L\'opez-Acedo, and A.~Nicolae.
\newblock {Moduli of regularity and rates of convergence for Fej\'er monotone
  sequenc es}.
\newblock {\em Israel Journal of Mathematics}, 232:261--297, 2019.

\bibitem{Neu2015}
E.~Neumann.
\newblock {Computational Problems in Metric Fixed Point Theory and their
  Weihrauch Degrees}.
\newblock {\em Logical Methods in Computer Science}, 11(4), 2015.

\bibitem{Pischke} N.~Pischke.
\newblock{Quantitative proof-theoretic analysis of a subgradient-type 
method for equilibrium problems.} Bachelor Thesis, TU Darmstadt 2020, 41pp.

\bibitem{Tao2008b}
T.~Tao.
\newblock{Norm Convergence of Multiple Ergodic Averages for Commuting
  Transformations}.
\newblock {\em Ergodic Theory and Dynamical Systems}, 28:657--688, 2008.

\bibitem{Tao2008a}
T.~Tao.
\newblock{Soft analysis, hard analysis, and the finite convergence 
principle}. In: 
\newblock {\em Structure and Randomness: Pages from Year One of a Mathematical
  Blog}.
\newblock American Mathematical Society, Providence, RI, 2008.

\bibitem{Wei2000}
K.~Weihrauch.
\newblock {\em {Computable Analysis}}.
\newblock Springer-Verlag Berlin Heidelberg, 2000.

\bibitem{Yam2001}
I.~Yamada.
\newblock {The hybrid steepest descent method for the variational inequality
  problem over the intersection of fixed point sets of nonexpansive mappings}.
\newblock In {\em {Inherently Parallel Algorithms for Feasibility and
  Optimization and Their Applications}}, pages 473--504. Elsevier, New York,
  2001.


\end{thebibliography}

\end{document}